\theoremstyle{plain}
\newtheorem{theorem}{Theorem}[section]
\newtheorem{Lemma}[theorem]{Lemma}
\newtheorem{lemma}[theorem]{Lemma}
\newtheorem{Corollary}[theorem]{Corollary}
\newtheorem{corollary}[theorem]{Corollary}
\newtheorem{remark}[theorem]{Remark}
\newtheorem{definition}[theorem]{Definition}
\newtheorem{Definition}[theorem]{Definition}
\newtheorem{Example}[theorem]{Example}
\newcommand{\unity}{{1\!\!\!\:\mathrm{l}}}
\newcommand{\C}{\mathbb{C}}
\newcommand{\Am}{\mathcal{A}}
\newcommand{\Sm}{\mathcal{S}}
\newcommand{\Tm}{\mathcal{T}}
\newcommand{\Sh}[1]{\mathcal{#1}} 
\DeclareMathOperator*{\Res}{Res}
\DeclareMathOperator{\id}{id}
\newcommand{\ci}{i}
\newcommand{\dfx}{dx}
\newcommand{\dfy}{dy}
\begin{document}
\title{Universal Deformations of a Curve and a Differential}
\author[E. Carberry]{Emma Carberry}
\email{emma.carberry@sydney.edu.au}
\address{School of Mathematics and Statistics\\University of Sydney\\Australia}
\author[M. Schmidt]{Martin Ulrich Schmidt}
\email{schmidt@math.uni-mannheim.de}
\address{Mathematics Chair III\\
Universit\"at Mannheim\\
D-68131 Mannheim, Germany}

\begin{abstract}
We construct universal local deformations (Kuranishi families) for pairs consisting of a compact complex curve and a meromorphic 1-form. Each pair is assumed to be locally planar, a condition which in particular forces the periods of the meromorphic differential to be preserved by local deformations. The hyperelliptic case yields universal local deformations for the spectral data of integrable systems such as simply-periodic solutions of  the Korteweg-de Vries (KdV) equation or the sinh-Gordon equation (cylinders of constant mean curvature). 
This is the first of two papers in which we shall develop a deformation theory of the spectral curve data of an integrable system.
\end{abstract} 
\date{\today}
\maketitle
We construct universal local deformations of a locally planar pair $ (X,\dfx) $, where $ X $ is a compact one-dimensional reduced complex space and $\dfx $ is a meromorphic differential on $X$ with prescribed poles and no residues. The notation $\dfx $ emphasises the role of the local primitive and does not indicate that the differential is globally exact. We furthermore construct universal local deformations of hyperelliptic locally planar $ (X,\dfx) $, where hyperellipticity is required to be preserved under deformation.

Our interest in the deformations of a curve together with a differential arose from wanting to develop a deformation theory for the spectral data of an integrable system. Hyperelliptic curves with a single differential arise as the spectral data of a number of integrable systems, such as the KdV, sinh-Gordon, sine-Gordon and non-linear Schr\"{o}dinger (NLS) equations. As such, in the present paper we for example produce universal local deformations for simply-periodic solutions of the KdV equation, for constant mean curvature cylinders in $S ^ 3,\,\mathbb R ^ 3, H ^ 3 $ or $  S ^ 2\times\mathbb R $ (sinh-Gordon) and for pseudo-spherical surfaces (sine-Gordon) and vortex filaments (NLS) satisfying periodicity conditions. 
One application of our results is that they provide a natural space, namely the base space of a universal deformation, on which  one can  integrate Whitham vector fields. Hence they enable us to construct Whitham deformations of spectral curve data.

In complex analytic geometry, there are two prominent and distinct deformation theories, namely for compact spaces and for complex space germs. We shall see (in Theorem~\ref {equivalent deformation}) that the addition of a meromorphic differential $dx$ so that the pair $(X,dx)$ is locally planar causes these two theories to essentially coincide.

Spectral curves are most commonly endowed with two  meromorphic differentials. Moreover, in the integrable systems applications, the local primitives of these differentials provide an local embedding of the curve into $\mathbb{C}^2$. This was our motivation for the notion of local planarity, which we introduce below. In a subsequent paper we shall study deformations of locally planar triples $ (X, \dfx, \dfy) $, where $\dfy$ is a second locally exact, meromorphic differential and the local primitives $x,y$ provide a local embedding of the curve $X$ into $\mathbb{C}^2$. We do not have the analogue of Theorem~\ref {equivalent deformation} for the two-differential case. However we shall study this case by combining the results of this paper with Whitham theory to identify amongst the deformations of $ (X,\dfx) $ those which additionally preserve the periods of $\dfy$.


At a generic point of $X$, a local primitve $x$ of $\dfx$ provides a local embedding of $X$ into $\C$. This fails exactly at the singularities of $X$ and at the smooth roots and poles of $\dfx$.
We say that the pair $ (X,\dfx) $ is locally planar (Definition~\ref{def:locally planar}) when 
\begin{enumerate}
\item at the singularities of $X$ and the smooth zeros of $\dfx$, any local primitive $ x $ of $\dfx $ can be complemented by another local function $ y $ to give a local embedding $ (x, y) $ of $ X $ into the affine plane $\C ^ 2 $,
\item at the poles of $\dfx$, an appropriate negative power of any primitive $x$ provides a local embedding into $\C$.
\end{enumerate}

A deformation of a pair $(X,\dfx) $ is a deformation $X \to Y \twoheadrightarrow \Sm$ together with a  differential on $Y$ with prescribed poles which away from its poles has local primitives extending those of $\dfx $, see Definition~\ref{def2}. An important consequence of this additional assumption is  that the periods of $\dfx $ are preserved by local deformations of the data $(X,\dfx) $, as shown in Remark~\ref {Remark:periods}. This is particularly pertinent to the spectral curve applications, as then the periods of the differential $\dfx $ frequently satisfy restrictive conditions, such as being integer-valued. 
A morphism of deformations of $ (X,\dfx ) $ is a morphism of deformations of $ X $ which respects the local primitives on the total space, see Definition~\ref{def3}. 

Strikingly, such period-preserving deformations of compact curves with  differential $ (X,\dfx) $ can be studied by considering  deformations of germs $ (X_q, x_q) $ at just the finitely many points where $X$ has a singularity or $\dfx$ has a zero. These are the only points where the data $ (X_q, x_q) $ can deform non-trivially. We show in Theorem~\ref {equivalent deformation} that every deformation of the locally planar $ (X, \dfx) $ is uniquely obtained up to isomorphism by gluing the deformations of the germs  $(X_q,x_q)$ at these finitely many points together with a trivial deformation. Here the parameter space may need to be shrunk to effect the gluing; so it is convenient to work instead with local deformations, i.e. those for which the parameter space is a space germ. Thus the deformation theory of locally planar $ (X,\dfx) $ reduces to that of deformations of finitely many germs $ (X_q, x_q) $. This substantially simplifies the deformation theory of locally planar $ (X,\dfx) $ in comparison with that of the curve alone.  The analogous result does not hold for the Kodaira-Spencer theory of deformations of a curve.
In our apporach to deformations of locally planar $ (X,\dfx) $, it is  not necessary to create a global theory along the lines of \cite{ACG:11}. Instead we study deformations of such $ (X,\dfx) $ by adapting the deformation theory of plane curve singularities~\cite{dJP:00,GLS:07}. 


Even in comparison with the deformation theory of plane curve singularities, that of locally planar $ (X, \dfx) $ enjoys  certain advantages. For plane curve singularities, using the Tjurina algebra of isomorphism classes of infinitesimal deformations, one can construct   semi-universal local deformations \cite{dJP:00,GLS:07}. 
 One cannot hope for  universal local deformations  as the dimension of the Tjurina algebra is not locally constant. The deformations of a plane curve singularity $X_q $ and of the locally planar pair $(X_q, x_q) $ coincide but for the latter, morphisms between deformations are required to respect the local functions $x_q $, Definition~\ref{definition:deformation-pairs}. This more restrictive notion
of morphisms of deformations results in the dimension of the space of isomorphism classes of infinitesimal deformations of $ (X,\dfx) $ being locally constant. Furthermore, the infinitesimal deformations form a vector bundle over the base (Theorem~\ref {l3}). 
We shall use this vector bundle structure to show that any basis for the space of isomorphism classes of infinitesimal deformations of $ (X,\dfx) $ induces a universal local deformation \eqref {eq:universal 1}. 
The construction of semi-universal deformations of plane curve singularities referenced above uses Grauert's powerful Approximation Theorem \cite[Chapter 8.2]{dJP:00}. The authors note that this can possibly be avoided, but we are not aware of a reference where this has been carried out. Our arguments do not invoke Grauert's Approximation Theorem.


When the curve $ X $ is hyperelliptic, 
then a local primitive of $ dx $ may be complemented by 
a local parameter $ y $ for the quotient of $ X $ by its hyperelliptic involution $\sigma $ to locally embed $X $  in $\C ^2 $. Restricting the allowable deformations to those which preserve the hyperellipticity of the curve, we again construct  universal local deformations for hyperelliptic $ (X,\dfx,\sigma) $.  This immediately yields universal local deformations for finite-gap solutions of any integrable system whose spectral data consists of a hyperelliptic curve and a differential, as above. We remark that this can be considered as a special case of the more typical situation in which a spectral curve has a pair of meromorphic differentials.

The objects, deformations and morphisms we shall consider are defined in Section~\ref{sec:def1}. Throughout the paper we incorporate the option of the data satisfying a reality condition, as is typical in the integrable systems applications. Section~\ref {sec:patching} explains that deformations of a curve $ X $ and differential $ \dfx $ as above are both generated and uniquely determined up to isomorphism by deformations of finitely many germs  of such data. In Section~\ref{sec:infinitesimal} we show that any basis for the space of isomorphism classes of infinitesimal deformations of $ (X,\dfx) $ induces an $x$-deformation \eqref {eq:universal representative} of $(X,\dfx)$. Importantly, the infinitesimal deformations of the fibres of \eqref {eq:universal representative} form a vector bundle over the base. Our main result, Theorem~\ref {t4}, is that the local $x$-deformations represented by \eqref {eq:universal representative} are universal. To prove this we utilise a decomposition of certain holomorphic functions, which is the subject of Section~\ref{sec:decomposition}.  Section~\ref {sec:existence} completes the proof of Theorem~\ref {t4}. It also contains a discussion of the role of automorphisms of the base fibre $ (X, \dfx) $. We conclude in Section~\ref{sec:def1a} by extending Theorem~\ref {t4} to the case when $ X $ and its deformations are hyperelliptic. This yields applications in integrable systems theory. 
\section{Deformations of pairs $(X,\dfx)$}\label{sec:def1}
In this section we describe the data and the deformations we shall consider. Our primary objects of study are local deformations of pairs $(X,\dfx)$, where $X$ is a compact one-dimensional complex space and $ \dfx $ is a locally exact meromorphic differential on $X$ with prescribed poles.  Throughout we shall use $x$ to denote a local primitive of $\dfx$. We do not assume that the primitive $x$ is globally defined. All deformations of such pairs $(X,\dfx)$ (see Definition~\ref {def2}) preserve the periods of $\dfx $ (Remark~\ref{Remark:periods}). In our main Theorem~\ref{t4} we shall assume that $(X,\dfx)$ is locally planar (see Definition~\ref{def:locally planar}), which  will have the pleasant consequence that such pairs are locally parameterised by a smooth manifold. 

We begin by recalling the basic notions of deformations of a complex space, which we will extend to define appropriate deformations of the data $(X,\dfx)$. A \emph{deformation } $X\hookrightarrow Y\twoheadrightarrow\Sm$ of a complex space $X$ is a pair of  complex spaces  $Y$ and $\Sm$ together with  a flat holomorphic map $Y\twoheadrightarrow\Sm$, a marked point $s_0\in\Sm$ and an isomorphism from $ X $ to the preimage of the point $s_0$ in $Y$. The space $X$ is called the  \emph{special fibre}, $Y$ the  \emph{total space} and $\Sm$ the  \emph{base space} of the deformation. 
If \,$X$\, is compact, we further require that the map $Y\twoheadrightarrow\Sm$ is proper (c.f.\ \cite[Chapter~XI \S2]{ACG:11}. A  \emph{morphism} from a deformation $X\hookrightarrow Y\twoheadrightarrow\Sm\ni s_0$ to a deformation $X\hookrightarrow Z\twoheadrightarrow\Tm\ni t_0$ is a commutative diagram of holomorphic maps
$$ \begin{array}{cclcl}
X&\hookrightarrow&Y&\twoheadrightarrow&\Sm\ni s_0\\
\|&&\downarrow&&\downarrow\hspace{6mm}\downarrow\\
X&\hookrightarrow&Z&\twoheadrightarrow&\Tm\ni t_0
\end{array}$$
such that the surjective horizontal maps are flat. 
For any deformation $X\hookrightarrow Y\twoheadrightarrow\Sm\ni s_0$ and any open neighbourhood $O\subset\Sm$ of $s_0$ let $U\subset Y\twoheadrightarrow\Sm$ be the preimage of $O$. The resulting deformation $X\hookrightarrow U\twoheadrightarrow O\ni s_0$ is called the  \emph{restriction} of $X\hookrightarrow Y\twoheadrightarrow\Sm\ni s_0$ to $O$. To avoid double subscripts we shall denote the base point as $ 0 $; frequently $\mathcal S\subset\mathbb C ^ r $ and then without loss of generality we take the base point to be the origin.
\begin {definition}\label {definition:local}
A \emph{local deformation} $X\hookrightarrow Y\twoheadrightarrow\Sm_0 $ 
is an equivalence class of deformations $X\hookrightarrow Y\twoheadrightarrow\Sm$, each defined on a neighbourhood of $ 0\in\Sm $, where two such deformations are equivalent if their restrictions to some neighbourhood of $ 0\in\Sm $ are equal. \end {definition}

We now define the global data whose deformations we shall consider. Our interest in such objects is chiefly motivated by their appearance as the spectral curve data of an integrable system.
Recall that for $p\in X$,  
$\dfx_p$ denotes the germ of the 1-form $\dfx$ at $p$. More generally we denote the germ of a complex space or of a map at a point $p$ by adorning the symbol by a subscript $p$. 
%
Henceforth, whenever we refer to a pair $(X,\dfx)$, the following is assumed:
\begin{definition}\label{definition:single}
We shall denote by $(X,\dfx)$ the data of a compact, one-dimensional and reduced complex space $X$ with finitely many marked points $p_1,\ldots,p_K$ and a meromorphic 1-form $\dfx$ satisfying the following conditions: 
\begin{itemize}
\item[{\bf(a)}]{\bf Prescribed poles:} For each $p\in\{p_1,\ldots,p_K\}$ there exists $u_p\in\Sh{O}_{X,p}$ which vanishes at $p$ and maps $X_p$ biregularly onto $\mathbb{C}_0$ such that $\dfx_p=d(u_p^{-M_p})$ for some $M_p\in \mathbb{Z}^+$.
\item[{\bf(b)}]{\bf $\mathbf { \dfx} $ is holomorphic on $X^\circ$:} For each $q\in X^\circ:=X\setminus\{p_1,\ldots,p_K\}$ there exists a germ $x_q\in\Sh{O}_{X,q}$ vanishing at $q$ with $d(x_q)=\dfx_q$. 
\end {itemize}
If in addition there exists an anti-holomorphic involution $\eta$ of $X$ such that the following condition holds, then $(X,\dfx)$ is called \emph{real with respect to $\eta$}.
\begin {itemize}
\item[{\bf(c)}]{\bf $\boldsymbol{\eta}$-real:} $\eta$ permutes $p_1,\ldots,p_K$ and acts as:
\begin{align}\label{eq:real}
x_{\eta(q)}&=\eta^\ast\Bar{x}_q&
u_{\eta(p)}&=\eta^\ast\Bar{u}_p.
\end{align}
\end{itemize}
\end {definition}
We briefly mention some consequences of these conditions. Condition (a) guarantees that $p_1,\ldots,p_K$ are smooth points of $X$ and that $\dfx$ is locally exact about these poles. Hence $\dfx$ is locally exact on the 1-dimensional complex space $X$. Further, for each $p\in\{p_1,\ldots,p_K\}$ the equation $\dfx_p=d(u_p^{-M_p})$ determines the germ $u_p$ up to multiplication with a $M_p$-th root of unity. Finally, condition (c) implies that $\eta$ transforms $\dfx$ as $\eta^\ast \dfx=\overline{\dfx}$.

We now expand our notion of deformations to include the differential $\dfx$ and to include its prescribed poles at the smooth points $p_1,\ldots,p_K$:
\begin{definition}\label{def2}
An \emph{$x$-deformation} \begin{equation}\label{eq:deformation 1}
(X,\dfx)\xhookrightarrow{\iota}(Y,\dfx_Y)\xtwoheadrightarrow{\pi}\Sm
\end{equation}
of a pair $(X,\dfx) $ is given by a deformation $ X\xhookrightarrow{\iota}Y\xtwoheadrightarrow{\pi}\Sm $ of the complex space $X$ together with a meromorphic 1-form $\dfx_Y$ on $Y$, such that the following conditions hold:
\begin{itemize}
\item[{\bf(a)}]{\bf Prescribed poles:}
For each $p\in\{p_1,\ldots,p_K\}$, the germ $u_p$ of Definition~\ref{definition:single}~(a)   extends to $u_{Y,p}\in\Sh{O}_{Y,p}$ (i.e.~$u_p=u_{Y,p}\circ\iota$, where we identify $p\in X$ with $\iota(p)\in Y$).  Furthermore, $\dfx_{Y,p}=d(u_{Y,p}^{-M_p})$ holds, and $(u_{Y,p},\pi)$ maps $Y_p$ biregularly onto $(\mathbb{C}\times\Sm)_{(0,0)}$. Due to these conditions, each fibre $\pi^{-1}[\{s\}]$ of the deformation has marked points $p_1(s),\ldots,p_K(s)$, defined as the zeros of the $u_{Y,p_k}$. We write $Y^\circ=Y\setminus\{p_1(s),\ldots,p_K(s)\mid s\in\Sm\}$.
\item[{\bf(b)}] $\mathbf{\dfx_Y}$ {\bf is holomorphic and locally exact on \ }$\mathbf{Y^\circ}${\bf:} For any $\Tilde{q}\in Y^\circ$ there is a germ $x_{Y,\Tilde{q}}\in\Sh{O}_{Y,\Tilde{q}}$ vanishing at $\Tilde{q}$ with $d(x_{Y,\Tilde{q}})=\dfx_{Y,\Tilde{q}}$ and such that $x_q=x_{Y,q}\circ\iota$ for any $q\in X^\circ$.
Here we again identify $q\in X$ with $\iota(q) \in Y$.
\end{itemize}
If in addition $(X,dx)$ is real with respect to $\eta$ and the following condition holds, then the $x$-deformation $(X,\dfx)\xhookrightarrow{\iota}(Y,\dfx_Y)\xtwoheadrightarrow{\pi}\Sm$ is called \emph{real with respect to $\eta$}. 
\begin{itemize}
\item[{\bf(c)}] {\bf $ {\boldsymbol{\eta}} $-real:} $\eta$ extends to an anti-holomorphic involution of $Y$ and induces an involution of $\Sm$ which commute with the maps $X\xhookrightarrow{\iota}Y\xtwoheadrightarrow{\pi}\Sm$ and act as
\begin{gather}\begin{aligned}\label{eq:real2}
x_{Y,\eta(q)}&=\eta^\ast\Bar{x}_{Y,q}&u_{Y,\eta(q)}&=\eta^\ast\Bar{u}_{Y,q}.
\end{aligned}\end{gather}
\end {itemize}
\end{definition}
Note that again (a) guarantees that $p_1,\ldots,p_K$ are deformed into smooth points of the fibres $\pi$ and for each $p\in\{p_1,\ldots,p_K\}$, $u_{Y,p}$ is uniquely determined by $\dfx_Y$ and $u_p$. The reality condition (c) implies $\eta^\ast \dfx_Y=\overline{\dfx}_Y$.

Furthermore, parts (a) and (b) of this definition together imply that $\dfx_Y$ is locally exact on all of \,$Y$\,. 
The local exactness of $\dfx_Y$ in Definition~\ref{def2} forces the periods of $\dfx$ to be locally preserved along fibres of the deformation, by a standard Stokes' theorem argument which we will now detail.

\begin{remark}\label {Remark:periods}
Let $(X,\dfx)\hookrightarrow(Y,\dfx_Y)\twoheadrightarrow\Sm$ be an $ x $-deformation of a pair $(X, \dfx)$. Suppose $s:(-\epsilon,\epsilon)\to\Sm$ and $\gamma:\mathbb{S}^1\times(-\epsilon,\epsilon)\to Y$ are continuous maps such that the diagram
$$\begin{array}{rcl}
\mathbb{S}^1\times(-\epsilon,\epsilon)&\xrightarrow{\gamma}& Y\\
\downarrow\hspace{3mm} &&\downarrow\\(-\epsilon,\epsilon)&\xrightarrow{s}&\Sm
\end{array}$$
commutes, where the left vertical map is the projection onto the second factor and $s(0)=0$. Then for $t\in(-\epsilon,\epsilon)$ the element of $\pi_1(Y)$ defined by $\gamma(\cdot,t)$ and the period $\int_{\gamma(\cdot,t)}\dfx_Y$ are independent of $t$. 
\end{remark}
\begin{proof}
The continuous map $\gamma$ provides a homotopy between $\gamma(\cdot,a)$ and $\gamma(\cdot,b)$ and hence the resulting element of $\pi_1(Y)$ is independent of $t$. For the second statement, we note that if $ Y $ and $\gamma$ are smooth and $\gamma$ avoids the poles of $\dfx_Y$ then the remark is a consequence of  Stokes' Theorem and the fact that $\dfx_Y$ is locally exact and hence closed. We adapt the proof of Stokes' Theorem and give an argument which does not require reformulation when these conditions fail.
Fix $a<b\in(-\epsilon,\epsilon)$.  We consider the map $\beta:[0,2\pi]\times[a,b]\to Y$ with $\beta(\varphi,t)=\gamma(e^{i\varphi},t)$. For each $(\varphi,t)\in[0,2\pi]\times[a,b]$ we may take an open rectangular neighbourhood $Q_{(\varphi,t)}\subset[0,2\pi]\times[a,b]$ such that the differential $\dfx_Y$ has a local integral $x_{Y,\beta(\varphi,t)}$ defined on $\beta(Q_{(\varphi,t)})$. If $\beta(\varphi,t)$ is not a pole of $\dfx_Y$ then this local function is the one guaranteed by condition~(b) of Definition~\ref{def2}. If $\beta(\varphi,t)$ is a pole then we take $x_{Y,\beta(\varphi,t)}=u_{Y,p}^{-M_p}$ as in condition~(a) of Definition~\ref{def2}. The $Q_{(\varphi,t)}$ form an open cover of $[0,2\pi]\times[a,b]$ and we take a finite subcover $Q_n$ with $n\in \{1,\dotsc,N\}$. On each $Q_n$ we have a function $\tilde{x}_n$ obtained by pulling back the corresponding local integral of $\dfx_Y$. For $ t\in [a, b] $ we may choose $ 0 = \varphi _0 <\varphi _1 <\cdots <\varphi _I = 2\pi $ so that each $ [\varphi _{i -1}, \varphi _i]\times\{t\}\subset Q_{n_i} $ for some $ n_i\in\{1, \ldots , N \} $. Hence
\begin{align*}
\int_{\gamma(\cdot,t)}\dfx_Y &=\sum_{i = 1} ^ I \left(\Tilde{x}_{n_i}(\varphi_i,t) - \Tilde{x}_{n_i} (\varphi_{i-1},t)\right)=\sum_{i=1}^I\left(\Tilde{x}_{n_i}(\varphi_i,t)-\Tilde{x}_{n_{i+1}}(\varphi_i,t)\right),
\end{align*}
with $\Tilde{x}_{n_{I+1}}:=\Tilde{x}_{n_1}$. Since $\Tilde{x}_{n_i}-\Tilde{x}_{n_{i+1}}$ is constant on $Q_{n_i}\cap Q_{n_{i+1}}$ the claim follows.
%
\end{proof}
Now we shall describe the morphisms of these deformations.
\begin{definition}\label{def3}
For any pair $(X,\dfx)$ a \emph{morphism} from one $x$-deformation $ (X,\dfx)\hookrightarrow(Y,\dfx_Y)\twoheadrightarrow\Sm $ to another $x$-deformation $(X,\dfx)\hookrightarrow(\tilde{Y},\dfx_{\tilde{Y}})\twoheadrightarrow\tilde{\Sm}$ is a pair $(\phi,\psi)$ of holomorphic maps such that the diagram
\begin{equation*}
\begin{array}{cclcl}
X&\hookrightarrow&Y&\twoheadrightarrow&\Sm\\
\|&&\hspace{1mm}\downarrow\psi&&\downarrow\phi\\
X&\hookrightarrow&\tilde{Y}&\twoheadrightarrow&\tilde{\Sm}
\end{array}
\end{equation*}
commutes and the local functions of Definition~\ref{def2} satisfy
$u_{\tilde{Y},\psi(p)}\circ\psi_p=u_{Y,p}$ for each $p\in\{p_1,\ldots,p_K\}$ and $x_{\tilde{Y},\psi(q)}\circ\psi_q=x_{Y,q}$ for each $q\in Y^\circ $. 
\end {definition}
It immediately follows that $\dfx_Y$ is equal to $\psi^\ast \dfx_{\Tilde{Y}}$, and thus no special provision needs to be made for the differential in the above definition.

As for deformations of complex spaces, we have the following definition.
\begin{definition}
A \emph{local $x$-deformation}  $(X,\dfx)\hookrightarrow(Y,\dfx_Y)\twoheadrightarrow\Sm_0$ is an equivalence class of $x$-deformations $(X,\dfx)\hookrightarrow(Y,\dfx_Y)\twoheadrightarrow\Sm$ defined on a neighbourhood of $0$, where two such deformations are equivalent if their restrictions to some neighbourhood of $0$ are equal.
\end{definition}

The notions of morphism and of reality extend to local deformations in the obvious way.
Our primary interest lies in the category of  \emph{local deformations of locally planar $ (X,\dfx)$}. The main result of this paper is the construction of  universal objects in this category (Theorem~\ref{t4}) for locally planar pairs $(X,\dfx)$ (Definition~\ref{def:locally planar}). 

\section{Patching deformations}\label{sec:patching}
In this section we shall show that the deformation theory of our pairs $(X,\dfx)$ reduces to that of finitely many pairs $(X_q, x_q)$ of space germs $X_q$ and germs $x_q$. 

Let $(X,\dfx)$ be a pair as in Definition~\ref{definition:single} and $(X,\dfx)\hookrightarrow(Y,\dfx_Y)\xtwoheadrightarrow{\pi}\Sm$ an $x$-deformation. Let the finite set $\{q_1,\ldots,q_L\}$ consist of the singularities of $X$ together with the smooth roots of $\dfx$. The restriction of the corresponding local $x$-deformation to the complement of any open neighbourhood of $\{q_1,\ldots,q_L\}$ is isomorphic to the trivial local $x$-deformation. To see this, note first that condition (a) of Definition~\ref {def2} ensures that the space germs of $Y$ at the marked points $\{p_1, \ldots , p_K\} $ are isomorphic to the space germs of $\mathbb{C}\times\Sm$ at $(0,0)$. Furthermore, at smooth points $q\in X^\circ$ where $\dfx_q=d(x_q)$ does not vanish, $x_q$ maps $X_q$ biregularly onto $\mathbb{C}_0$ and $(x_{Y,q},\pi_q)$ maps $Y_q$ biregularly onto $(\mathbb{C}\times\Sm)_{(0,0)}$. This observation, which we formalise below, proves the uniqueness part of Theorem~\ref {equivalent deformation}. 
\begin{definition}\label{definition:deformation-pairs}
We consider pairs $(X_q,x_q)$ with $q\in X ^\circ $
where $x_q\in\Sh{O}_{X_q}$ with $x_q(q)=0$. A  \emph {deformation} $(X_q,x_q)\xhookrightarrow{\iota}(Y_q,x_{Y,q})\twoheadrightarrow\Sm_0$ of $(X_q,x_q)$ is a deformation $X_q\xhookrightarrow{\iota}Y_q\twoheadrightarrow\Sm_0$ of space germs together with $x_{Y,q}\in\Sh{O}_{Y_q}$ such that $x_q=x_{Y,q}\circ\iota$.

A  \emph {morphism} from $(X_q,x_q)\hookrightarrow(Y_q,x_{Y,q})\twoheadrightarrow\Sm_0$ to $(X_q,x_q)\hookrightarrow(\Tilde{Y}_q,x_{\Tilde{Y},q})\twoheadrightarrow\Tilde{\Sm}_0$ is a pair $(\phi,\psi)$ of holomorphic maps
\begin{equation*}
\begin{array}{cclcl}
X_q&\hookrightarrow&Y_q&\twoheadrightarrow&\Sm_0\\
\|&&\hspace{1mm}\downarrow\psi&&\downarrow\phi\\
X_q&\hookrightarrow&\Tilde{Y}_q&\twoheadrightarrow&\Tilde{\Sm}_0
\end{array}
\end{equation*}
which satisfies $x_{\Tilde{Y},q}=x_{Y,q}\circ\psi$.
\end{definition}

We note that an $x$-deformation $(X,dx)\hookrightarrow(Y,dx_Y)\twoheadrightarrow\Sm$ induces for each $q\in X ^  \circ $ a deformation $(X_q, x_q)\hookrightarrow (Y_q,x_{Y,q})\twoheadrightarrow\Sm_0$. 


\begin{Lemma}\label {lemma:uniqueness}
For any pair $(X,\dfx)$ let $q_1,\ldots,q_L\in X$ denote the points at which either $X$ is not smooth or $\dfx$ has a smooth root. Let 
\begin{align}\label{deformations}
(X,\dfx)\hookrightarrow(Y,\dfx_Y)&\twoheadrightarrow\Sm,&
(X,\dfx)\hookrightarrow(\Tilde{Y},\dfx_{\Tilde{Y}})&\twoheadrightarrow\Tilde{\Sm}
\end{align}
be $x$-deformations such that for each $q\in\{q_1,\ldots,q_L\}$ the induced deformations $(X_q,x_q)\hookrightarrow(Y_q,x_{Y,q})\twoheadrightarrow\Sm_0$ and $(X_q,x_q)\hookrightarrow(\Tilde{Y}_q,x_{\Tilde{Y},q})\twoheadrightarrow\Tilde{\Sm}_0$ are isomorphic. 
Then the local $x$-deformations defined by \eqref{deformations} are isomorphic.
\end{Lemma}
\begin {proof}
For each of the marked points $p\in\{p_1,\ldots,p_K\}$, recall from Definition~\ref{def2} that  the maps $ (u_{Y, p},\pi_p): Y_p \rightarrow (\C\times\Sm)_{(0, 0)} $ and $ (u_{\Tilde{Y},p},\tilde{\pi}_p): \Tilde{Y}_p \rightarrow (\C\times\Tilde{\Sm})_{(0, 0)} $ are biregular. The germ $\psi_p:Y_p\to \Tilde{Y}_p$ defined by $\psi_p = (u_{\Tilde{Y},p},\tilde{\pi}_p)^ {- 1}\circ (\id_\C,\phi_0)\circ (u_{Y, p},\pi_p) $ satisfies $u_{\Tilde{Y},p}\circ\psi_p=u_{Y,p}$. Analogously for each $q\in X^\circ\setminus\{q_1,\ldots,q_L\}$, since $(x_{Y, q},\pi_q)$ and $(x_{\Tilde{Y}, q},\tilde{\pi}_q)$ are biregular, replacing $u$ by $x$ yields $\psi_q:Y_q\to \Tilde{Y}_q$ with $x_{\Tilde{Y},q}\circ\psi_q=x_{Y,q}$. All these germs $\psi_q:Y_q\to \Tilde{Y}_q$ with $q\in X$ fit together to form a holomorphic map $\psi$ from an open neighbourhood of the compact subset $X\subset Y$ into $\Tilde{Y}$. The complement of this open neighbourhood of $X$ is mapped by $Y\twoheadrightarrow\Sm$ onto the complement of an open neighbourhood $\Sm'$ of $0\in\Sm$, since a proper continuous map to a metrisable space is closed. We consider the restriction of the $x$-deformation $(X,\dfx)\hookrightarrow(Y,\dfx_Y)\twoheadrightarrow\Sm$ to $\Sm'$. The map $\psi$ defines a morphism from this restriction to  $(X,\dfx)\hookrightarrow(\Tilde{Y},\dfx_{\Tilde{Y}})\twoheadrightarrow\Tilde{\Sm}$. Reversing the roles of these two $ x $-deformations yields the inverse morphism. This shows uniqueness of~\eqref{eq:deformation 1} up to isomorphism and restriction to an open neighbourhood of $0$ in $\Sm$.\end{proof}

Our primary concern is with pairs $(X,\dfx)$ which are locally planar in the sense that for any $q\in X^\circ$ the space germ $X_q$ is biregular to a one-dimensional reduced space germ in  \,$\mathbb{C}^2$\, such that one of the coordinates is a primitive of $\dfx_q$. Away from the singularities of \,$X$\, and the smooth roots of $dx$, this condition is automatic, because then $x_q$ itself defines a biregular map to the space germ $\mathbb{C}_0$. Hence it suffices to require the condition of local planarity at the finitely many points of $X^\circ$ which are either singularities or smooth roots of $\dfx$.

\begin{Definition}\label{def:locally planar}
	For any $(X,\dfx)$ let $q_1,\ldots,q_L$ denote the points of $X^\circ$ which are either singularities of $X$ or smooth roots of $\dfx$. For $l=1,\ldots,L$, $(X_l,x_l)$ denotes the corresponding pair $(X_{q_l},x_{q_l})$. We call $(X,\dfx)$ \emph{locally planar} if for any $l=1,\dots,L$ there exists a \emph{complementary function} $y_l\in\Sh{O}_{X_l,q_l}$ which vanishes at $q_l$ such that $(x_l,y_l)$ maps $X_l$ biregularly onto the zero set $V(f_l)$ of some $f_l\in\mathbb{C}\{x,y\}$.
\end{Definition}

To avoid double indices we here and henceforth abbreviate the index $q_l$ for each $l=1,\ldots,L$ by the index $l$ so that we will hitherto write $X_l$ for the space germ $X_{q_l}$, $x_l=x_{q_l}$ and so forth. 
For given $(X,\dfx)$ the germs $y_l$ are not unique. 

The pair $(X,\dfx)$ is locally planar if and only if for any $q\in X^\circ$ which is either a singularity or a smooth root of $\dfx$ there exists a germ $y_q\in\Sh{O}_{X,q}$ which vanishes at $q$, such that the algebra homomorphism $\mathbb{C}\{x_q,y_q\}\to\Sh{O}_{X,q}$ is surjective, or equivalently such that the stalk $\Sh{O}_{X,q}$ is generated by $y_q$ as a $\mathbb{C}\{x_q\}$-module.
In fact, due to the Local Parametrisation Theorem \cite[Theorem~3.4.14.]{dJP:00}, the kernel of this homomorphism is an ideal which is generated by one element $f_q\in\mathbb{C}\{x_q,y_q\}$.

An example where the local planarity condition fails is the triple point $\{(x,y,z)\in\mathbb{C}^3\mid xy=xz=yz=0\}$. The space germ at $(0,0,0)$ cannot be embedded into $\mathbb{C}^2$, because in the quotient ring $x$, $y$ and $z$ are linearly independent.

For a germ $\Sm_0 $ of a complex space $\Sm $ at $0\in\Sm$ we denote by $\mathbb{C}\{x,y\}\Hat{\otimes}\Sh{O}_{\Sm_0}$ the stalk of the holomorphic functions on $\mathbb{C}^2\times\Sm$ at $(x,y,s)=(0,0,0)$ (c.f.~\cite[Definition~7.3.6]{dJP:00}). It is proven in \cite[Corollary~II.1.6]{GLS:07} that all local deformations of a locally planar germ $V(f)$ preserve the locally planar condition in the following sense:
\begin{lemma}\label{le:embbede}
Any deformation of a space germ $V(f)$ with $f\in\mathbb{C}\{x,y\}$ is of the form
\[V(f)\hookrightarrow V(F)\twoheadrightarrow\Sm_0\]
with $F\in\mathbb{C}\{x,y\}\Hat{\otimes}\Sh{O}_{\Sm_0}$ and a unit $h\in\C\{x, y\}$ such that
\begin{equation}\label{eq:deform}
\hspace{34mm}F(x,y,0)=h(x,y)f(x,y).\hspace{32mm}\qed
\end{equation}
\end{lemma}
A space germ $V(f)$ always has local germs $x_0$, $y_0$. In particular, by Lemma~\ref{le:embbede}, a deformation of $V(f)$ is automatically a deformation of the pair $(V(f),x_0)$, see Definition~\ref{definition:deformation-pairs}. The morphisms between deformations of $V(f)$ in which we are interested are morphisms between deformations of the pair $(V(f), x_0)$, which we term $x$-morphisms:

\begin {definition}\label{definition:deformation}
For any reduced space germ $V(f)$ with $f\in\mathbb{C}\{x,y\}$ an $x$-morphism from a deformation $V(f)\hookrightarrow V(F)\twoheadrightarrow\Sm_0$ to a deformation $V(f)\hookrightarrow V(\Tilde{F})\twoheadrightarrow\widetilde\Sm_{0}$ is a morphism  of deformations
\begin{gather}\label{eq:def germ}
\begin{array}{ccccl}
V(f)&\hookrightarrow& V(F) &\twoheadrightarrow&\Sm_0\\
\| &&\downarrow\psi&&\downarrow\phi\\
V(f)&\hookrightarrow&V(\Tilde{F})&\twoheadrightarrow&\widetilde{\Sm}_0
\end{array}
\end{gather}
so that $\psi(x,y)=(x,u(x,y,s))$ with $u\in\mathbb{C}\{x,y\}\Hat{\otimes}\Sh{O}_{\Sm_0}$.
\end{definition}

We pause briefly to describe what data  explicitly exhibits an $ x $-morphism. Let $f\in\mathbb{C}\{x,y\}$  and $F, \Tilde{F}\in\mathbb{C}\{x,y\}\Hat{\otimes}\Sh{O}_{\Sm_0}$ describe the deformations~\eqref{eq:def germ} as in Definition~\ref{definition:deformation}.  Let $\phi$ denote the map from $\Sm_0 $ to $\widetilde\Sm_0 $ in the $x$-morphism, then the map $V(F)\to V(\Tilde{F})$ is of the form $(x,y,s)\mapsto(x,u(x,y,s),\phi(s))$ with $u\in\mathbb{C}\{x,y\}\Hat{\otimes}\Sh{O}_{\Sm_0}$ and $u(x,y,0)=y $. In particular there exists a unit $H\in\mathbb{C}\{x,y\}\Hat{\otimes}\Sh{O}_{\Sm_0}$ with
\begin{align}\label{eq:mor}
 H(x,y,s)\Tilde{F}(x,u(x,y,s),\phi(s))&=F(x,y,s).
\end{align}
 Evaluating $u$  at $s=0$ yields $u(x,y,0)=y$. If the defining functions are normalised in the sense that $ f (x, y) = F (x, y, 0) = \Tilde{F} (x, y, 0) $, one also has $ H (x, y, 0) = 1 $. 
%

The main result of this section establishes a one-to-one correspondence between isomorphism classes of local $x$-deformations of a locally planar pair $(X,\dfx)$ and $x$-isomorphism classes of deformations of the corresponding $V(f_l)$ at the points $q_l$, $l=1,\ldots,L$ of Definition~\ref{def:locally planar}.
 The additional structure imposed by $\dfx$ on the pair $(X,\dfx)$ serves to restrict the morphisms of deformations of the corresponding germs $ V (f) $ to the $x$-morphisms defined above. 
 


We now proceed to the more technically difficult existence part of Theorem~\ref {equivalent deformation}. That is, we now show that given deformations of each of the space germs $V(f_l)$ one can patch these together with the trivial deformation to yield an  $ x $-deformation of $(X,\dfx)$.
\begin{theorem}\label{equivalent deformation}
For any locally planar pair $(X,\dfx)$ let $q_1,\ldots,q_L\in X$ and $f_1,\ldots,f_L\in\mathbb{C}\{x,y\}$ be as in Definition~\ref{def:locally planar}. For each $l=1,\ldots,L$ suppose we are given a deformation $V(f_l)\hookrightarrow V(F_l)\twoheadrightarrow\Sm_0$ with $F_l\in\mathbb{C}\{x,y\}\Hat{\otimes}\Sh{O}_{\Sm_0}$. Then there exists a local $x$-deformation $(X,\dfx)\hookrightarrow(Y,\dfx_Y)\twoheadrightarrow\Sm_0$ such that for each $l=1,\dots,L$ the induced deformation $X_l \hookrightarrow Y_l \twoheadrightarrow\Sm_0$ is $x$-isomorphic to the given deformation
$V(f_l) \hookrightarrow V(F_l) \twoheadrightarrow\Sm_0$. This local $x$-deformation is unique up to isomorphism. 
%
%
%
\end{theorem}

\begin{proof}
We prove the theorem by proving the following three claims. The uniqueness statement in the theorem then follows from Lemma~\ref{lemma:uniqueness}. 
\begin{itemize}
	\item[\emph{Claim 1.}] $\Sh{S}_0$ can be embedded into $\mathbb{C}^k$ such that there exist an open neighbourhood $\tilde{\Sh{S}}$ of $0$ in $\mathbb{C}^k$, some $\delta_1 > 0$, and for each $l$ Weierstra{\ss} polynomials $\Tilde{f}_l \in \Sh{O}_{B(0,\delta_1)} \Hat{\otimes} \mathbb{C}[y]$ and $\Tilde{F}_l \in \Sh{O}_{B(0,\delta_1)} \Hat{\otimes} \mathbb{C}[y] \Hat{\otimes} \Sh{O}_{\tilde{\Sh{S}}}$ with the following properties: 
	\begin{itemize}
		\item 
		$F_l$ is equal to the germ of $\Tilde{F}_l$ at $0$ multiplied with a unit, and $\Tilde{f}_l$ is the evaluation of $\Tilde{F}_l$ at $0\in \tilde{\Sh{S}}$. 
		\item Defining
		\[V_l=\left\{(x, y,s)\in B(0,\delta_1)\times\mathbb{C}\times\tilde{\Sh{S}}\mid \tilde{F}_l(x,y,s)=0\right\},\]
		the map $V_l \rightarrow B(0,\delta_1)\times\tilde{\Sh{S}},(x,y,s) \mapsto(x,s)$
		is a Weierstra{\ss} covering and is unbranched over $(x,s)\in\Am\times\tilde{\Sh{S}}$ for the annulus $			\Am=B(0,\delta_1)\setminus\overline{B(0,\delta_1/2)}$.
	\end{itemize}		
	\item[\emph{Claim 2.}] After shrinking $\delta_1 $ and $\tilde{\Sh{S}}$ if necessary, we set $C_l:=x_l^{-1} (\overline{B(0,\delta_1/2)})$. Then we may glue the trivial deformation $\big(X\setminus(C_1\cup\ldots\cup C_L)\big)\times\tilde{\Sh{S}}$ with $V_1\cup\ldots\cup V_L$ to give a deformation $(X,\dfx)\hookrightarrow(\tilde{Y},\dfx_{\tilde{Y}})\twoheadrightarrow\tilde{\Sh{S}}$.
	\item[\emph{Claim 3.}] The restriction $(X,\dfx)\hookrightarrow(Y,\dfx_{Y})\twoheadrightarrow{\Sh{S}}_0 \subset \tilde{\Sh{S}}_0$ of the local $x$-deformation corresponding to  $(X,\dfx)\hookrightarrow(\tilde{Y},\dfx_{\tilde{Y}})\twoheadrightarrow\tilde{\Sh{S}}$ is a local $x$-deformation with the property that for every $l$, $X_l\hookrightarrow Y_l\twoheadrightarrow{\Sh{S}}_0$ is $x$-isomorphic to $V(f_l)\hookrightarrow V(F_l)\twoheadrightarrow\Sm_0$. 
\end{itemize}
	

\emph{Proof of Claim~1.}
From Definition~\ref{def:locally planar}, each $X_l$ is biregular via $(x_l,y_l)$ to the vanishing set germ $V(f_l)$ of some $f_l\in\C\{x, y\}$. Furthermore, by Lemma~\ref{le:embbede} any deformation of $V(f_l)$ with base $\Sm_0$ is of the form $V(f_l)\hookrightarrow V(F_l)\twoheadrightarrow\Sm_0$ for some $F_l\in\mathbb{C}\{x,y\}\Hat{\otimes}\Sh{O}_{\Sm_0}$, where $F_l(x, y,0)=H_l(x, y)f_l(x, y)$ for a unit $H_l\in\mathbb{C}\{x, y\}$. By replacing $f_l$ by $H_lf_l$ if necessary, we assume without loss of generality that $F_l(x, y,0)=f_l(x, y)$. The space germ $\Sm_0$ is embedded in $\mathbb{C}^k_0$ for some $k$ greater than or equal to the dimension of $\Sm_0$. The holomorphic functions $F_1,\ldots,F_L$ clearly extend to holomorphic functions in $\mathbb{C}\{x,y\}\Hat{\otimes}\Sh{O}_{\mathbb{C}^k_0}$. This extension is not unique, but the restriction to $\Sm_0$ is unique. 

We apply the Weierstra{\ss} Preparation Theorem \cite[Chapter~I Theorem 1.4]{PR:94} and replace $F_1,\ldots,F_L$ by Weierstra{\ss} polynomials $\tilde{F}_1,\ldots,\tilde{F}_L\in\mathbb{C}\{x\}[y]\Hat{\otimes}\Sh{O}_{\mathbb{C}^k_0}$. Furthermore, we replace $f_1,\ldots,f_L$ by the corresponding evaluations $\tilde{f}_1,\ldots,\tilde{f}_L\in\mathbb{C}\{x\}[ y]$ of these Weierstra{\ss} polynomials at $s=0 \in \mathbb{C}^k$. For $l=1,\ldots,L$ the polynomials $\tilde{F}_l$ and $\tilde{f}_l$ have highest coefficient $1$ and all other coefficients vanish at $(x,s)=(0,0)$ and $x=0$, respectively.


We choose an open ball $\tilde{\Sm}$ around $0\in\mathbb{C}^k$ and $\delta_1>0$ such that the polynomials $\tilde{F}_l$ are holomorphic on $(x, y,s)\in B(0,\delta_1)\times\mathbb{C}\times\tilde{\Sm}$. For sufficiently small $\delta_1$ there exist disjoint open neighbourhoods $O_l$ of $q_l$ in $X$ such that the maps $(x_l, y_l)$ in Definition~\ref{def:locally planar} extend to biregular maps
\begin{align}\label{eq:map b2}
(x_l,y_l)&:O_l\to U_l,&
U_l=\{(x, y)\in B(0,\delta_1)\times\mathbb{C}\mid \tilde{f}_l(x, y)=0\}.
\end{align}
The set $\{q_1,\ldots,q_L\}$ is comprised of the singular points of $X$ and the smooth points which are roots of $\dfx$. Hence 
\begin {align}\label{covering 1}
U_l &\rightarrow B(0,\delta_1),&(x, y) &\mapsto x
\end {align}
is a Weierstra{\ss} covering 
with a single branch point at $(0,0)$. We can take $\delta_1$ sufficiently small and shrink $\tilde{\Sm}$ so that for all $l=1,\ldots,L$,
\[V_l=\left\{(x, y,s)\in B(0,\delta_1)\times\mathbb{C}\times\tilde{\Sm}\mid \tilde{F}_l(x,y,s)=0\right\}\]
is exhibited as a Weierstra{\ss} covering 
\begin{align}\label{covering 2}V_l &\rightarrow B(0,\delta_1)\times\tilde{\Sm},&(x,y,s) &\mapsto(x,s)
\end{align}
and is unbranched over $(x,s)\in\Am\times\tilde{\Sm}$ for the annulus 
\begin{align*}
\Am&=B(0,\delta_1)\setminus\overline{B(0,\delta_1/2)}.
\end{align*}

\emph{Proof of Claim~2.}
We denote the restrictions of the coverings~\eqref{covering 1} and~\eqref{covering 2} to $\Am$ and $\Am\times\tilde{\Sm}$ as
\begin{align*}
\widehat{U}_l&\to\Am,& \widehat{V}_l&\to\Am\times\tilde{\Sm},
\end{align*}
respectively. For each $l=1,\ldots,L$ let $C_l:=x_l^{-1} (\overline{B(0,\delta_1/2)})$.
We shall extend the given deformations of $O_1,\ldots,O_L$ by the trivial deformation
$$X\setminus(C_1\cup\ldots\cup C_L)\hookrightarrow\big(X\setminus(C_1\cup\ldots\cup C_L)\big)\times\tilde{\Sm}\twoheadrightarrow\tilde{\Sm}$$
to $ x $-deformations of the global pair $(X,\dfx)$. For this purpose we glue for any $l=1,\ldots,L$ the open subsets $\big(O_l\setminus C_l\big)\times\tilde{\Sm}$ of $\big(X\setminus(C_1\cup\ldots\cup C_L)\big)\times\tilde{\Sm}$ via the composition of the maps $(x_l,y_l)\times\id_{\tilde{\Sm}}$ with biholomorphic maps $\psi_l:\widehat U_l\times\tilde{\Sm}\to\widehat V_l$ such that the diagram
$$\begin{array}{ccccc}
\big(O_l\setminus C_l\big)\times\tilde{\Sm}&\xrightarrow{(x_l,y_l)\times\id_{\tilde{\Sm}}}&\widehat{U}_l\times\tilde{\Sm}&\xrightarrow{\psi_l}&\widehat{V}_l\\
\\
&&\downarrow&&\downarrow\\
&&\Am\times\tilde{\Sm}&=&\Am\times\tilde{\Sm}.
\end{array}$$
commutes. In accordance with $\tilde{F}_l(x,y,0)=\tilde{f}_l(x,y)$ we assume that the restriction of $\psi_l$ to $\widehat{U}_l\times\{0\}$ is equal to $\id_{\widehat{U}_l\times\{0\}}$.

We now demonstrate for any $l=1,\ldots,L$ the existence of a unique such biholomorphic map $\psi_l$. The coverings $\widehat{U}_l$ and $\widehat{V}_l$ are unbranched Weierstra{\ss} coverings with an equal number of sheets. On $\widehat{U}_l$ therefore, $y $ is locally a holomorphic function $y=y(x) $ of $x\in\Am$. Similarly on $\widehat{V}_l$, locally $ y $ is a holomorphic function $ y = y (x, s) $ with $(x,s)\in\Am\times\tilde{\Sm}$. Furthermore $ y (x, 0) = y (x) $. Each local branch of $y (x) $ extends uniquely to any simply connected subset of $\mathcal A $ containing the original point. Similarly each branch of $ y (x, s) $ extends uniquely on simply connected subsets of $\Am\times\tilde{\Sm}$. Let $W_{l_i} $ be an open subset of $\widehat{U}_l$ which is mapped biholomorphically by $x$ onto an open simply connected subset $\mathcal{B}$ of $\Am$. The cartesian product $W_{l_i}\times\tilde{\Sm}$ is also simply connected since $\tilde{\Sm}$ is an open ball in $\mathbb{C}^k$. Hence there exists a unique biholomorphic map from $W_{l_i}\times\tilde{\Sm}$ onto a simply connected open subset of $\widehat V_l$ which preserves $(x,s)$ and is equal to the identity for $s=0$. 
\[\begin{tikzcd}
W_{l_i}\times\tilde{\Sm} \arrow{r} \arrow[swap]{d}{} &  \widehat{V}_l\arrow{d} \\ \mathcal{B}\times\tilde{\Sm}  \arrow{r} & \mathcal{B}\times \tilde{\Sm}
\end{tikzcd} \quad
\begin{tikzcd}
(x,y (x), s)\rar[maps to]\dar[maps to] & (x, y (x, s), s)\dar[maps to]\\
(x, s)\rar[maps to] & (x, s).
\end{tikzcd}\]
We first cover $\Am$ by finitely many simply connected open subsets $\mathcal{B}$ such that the intersection of any two of them is either empty or connected. Denoting by $ n_l $ the number of sheets of the Weierstrass coverings~\eqref{covering 1} and~\eqref{covering 2}, there are $n_l $ simply connected open subsets $ W_{l_i} $ which each map biholomorphically onto $\mathcal{B} $ via the projection $ x $, and together these cover $\widehat{U}_l$. Since $ \widehat V_l$ is unbranched over $(x,s)\in\Am\times\tilde{\Sm}$, the maps in the above diagram patch together to form a unique biholomorphic map
\begin {align*}
\psi_l:   \widehat U_l\times\tilde{\Sm} &\to  \widehat V_l,& (x, y (x), s) & \mapsto (x, y (x, s), s).
\end {align*}
Writing $\widehat O_l = O_l\setminus C_l $, we have that the open subsets $\widehat O_l\times\tilde{\Sm}$ of $\big(X\setminus(C_1\cup\ldots\cup C_L)\big)\times\tilde{\Sm}$ are mapped by $\psi_l\circ\big((x_l, y_l)\times\id_{\tilde{\Sm}}\big)$ biregularly onto $\widehat{V}_l$. This identification of the two annuli allows us to glue $\big(X\setminus(C_1\cup\ldots\cup C_L)\big)\times\tilde{\Sm}$ with $V_1\cup\ldots\cup V_L$ along the maps $\psi_l\circ\big((x_l, y_l)\times\id_{\tilde{\Sm}}\big)$ for $l=1,\ldots,L$. This defines $\tilde{Y}$ together with the map $\tilde{Y}\twoheadrightarrow\tilde{\Sm}$. The differential $\dfx $ on $X $ extends to a differential on $\big(X\setminus(C_1\cup\ldots\cup C_L)\big)\times\tilde{\Sm}$. Its restriction to $\widehat O_l\times\tilde{\Sm}$  agrees with the pullback under $\psi_l\circ\big((x_l,y_l)\times\id_{\tilde{\Sm}}\big):\widehat O_l\times\tilde{\Sm}\rightarrow \widehat{V}_l $ of the differential $ \dfx$ on $V$. 
Hence we obtain a global differential  $\dfx_{\tilde{Y}} $ on $ \tilde{Y} $ which for $q\notin\{p_1,\ldots , p_K\}$ satisfies $ \dfx_{\tilde{Y}, q} = d (x_{\tilde{Y}, q}) $. The sets $\{p_1,\ldots , p_K\}$ and $\{q_1,\ldots , q_L\} $ are disjoint and so each $p \in\{p_1, \ldots , p_K\} $ has an open neighbourhood $ O_p $ in $ X $ such that $ O_p\times\tilde{\Sm} $ is a neighbourhood of $p $ in $ \tilde{Y} $. Hence the germ  $u_p$ of Definition~\ref{definition:single}~(a)   extends to $u_{\tilde{Y},p}\in\Sh{O}_{\tilde{Y},p}$ such that $(u_{\tilde{Y},p},s)$ maps $\tilde{Y}_p$ biregularly onto $(\mathbb{C}\times\tilde{\Sm})_{(0,0)}$ and $\dfx_{\tilde{Y},p}=d(u_{\tilde{Y},p}^{-M_p})$.
  We claim that the map $\tilde{Y}\twoheadrightarrow\tilde{\Sm}$ is proper and flat. It is the composition of a Weierstra{\ss} map $V_l\twoheadrightarrow B(0,\delta_1)\times\tilde{\Sm}$ and the projection $B(0,\delta_1)\times\tilde{\Sm}\to\tilde{\Sm}$. By \cite[Chapter~II Proposition~2.10]{PR:94} and the Weierstra{\ss} Isomorphism \cite[Chapter~2 \S4.2]{GR:84}, Weierstra{\ss} maps are flat. Furthermore, by \cite[Chapter~II Corollary~2.7]{PR:94} projections of complex spaces are flat. This shows flatness of $\tilde{Y}\twoheadrightarrow\tilde{\Sm}$, since the composition of flat maps is flat \cite[Chapter~II Proposition~2.6]{PR:94}. It is also proper since $X$ and $C_1,\ldots,C_L$ are compact. Hence we have constructed an  $ x $-deformation ~\eqref{eq:deformation 1} with the properties in Claim~2.
  
\emph{Proof of Claim~3.} The restriction of the proper map $\tilde{Y} \twoheadrightarrow\tilde{\Sm}$ to the closed subset $Y$ of $\tilde{Y}$ is again proper. Due to the Weierstra{\ss} Isomorphism \cite[Chapter~2 \S2.]{GR:84}, the direct images of $\Sh{O}_{V_l\cap Y}$ under the restrictions of the Weierstra\ss{} maps $V_l\twoheadrightarrow B(0,\delta_1)\times\tilde{\Sm}$ to $V_l \cap Y$ are locally free. Therefore by the same arguments as in the proof of Claim~2, these restrictions and the map $Y \twoheadrightarrow \Sm$ are flat. 
%
%
This shows that the restriction $(X,dx) \hookrightarrow (Y,dx_Y) \twoheadrightarrow \Sm_0$ of the local $x$-deformation constructed in Claim~2 is again a local $x$-deformation. Because the $\tilde{F}_l$ are extensions of the $F_l$, it follows that for every $l$, $X_l\hookrightarrow Y_l\twoheadrightarrow{\Sh{S}}_0$ is $x$-isomorphic to $V(f_l)\hookrightarrow V(F_l)\twoheadrightarrow\Sm_0$.
\end {proof}

\section{Infinitesimal deformations}\label{sec:infinitesimal}
In this section we will construct a local $ x $-deformation  \eqref {eq:universal 1} of $ (X, \dfx) $ from any basis of the isomorphism classes of infinitesimal $ x $-deformations of $ (X, \dfx) $. 
We show that the $x$-deformations constructed from differnt bases are isomorphic.
Furthermore, we prove that the isomorphism classes of infinitesimal $ x $-deformations of the fibres of \eqref {eq:universal representative} form a vector bundle over its base. We will in Section~\ref{sec:existence} use this vector bundle structure to show that the local $ x $-deformations constructed in this section are universal. 
\begin{Definition}\label{def5}
An infinitesimal deformation is a deformation whose base space is the complex space germ $\mathcal{I}_\epsilon$ consisting of a single point with local ring (i.e. holomorphic functions) $\mathcal{I}_\epsilon=\mathbb{C}\{\epsilon\}/\langle\epsilon^2\rangle\simeq\mathbb{C}[\epsilon]/\langle\epsilon^2\rangle$. 
\end{Definition}
Informally, $\mathcal{I}_\epsilon $ may be considered as a point with a single tangent direction. For an analytic curve germ $ V (f) $ in the plane, 
the Tjurina algebra $\mathbb{C}\{x,y\}\big/\langle f,\tfrac{\partial f}{\partial x},\tfrac{\partial f}{\partial y}\rangle $ parameterises isomorphism classes of infinitesimal deformations of $ V (f) $ \cite[Theorem~10.1.5]{dJP:00}. The natural analog holds for $ x $-isomorphism classes: 
\begin{Lemma}\label{l1}
The  $x$-isomorphism classes of the infinitesimal deformations of a space germ $ V (f) $ with $f\in\mathbb{C}\{x,y\},\, f(0,0)=0$ are in one-to-one correspondence with 
\begin{align}\label{eq:quotient}
\mathbb{C}\{x,y\}\big/\langle f,\tfrac{\partial f}{\partial y}\rangle.
\end{align}
\end{Lemma}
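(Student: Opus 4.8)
The plan is to identify an infinitesimal $x$-deformation with its first-order term and then read off the $x$-isomorphism relation by a single expansion modulo $\epsilon^2$. First I would normalise: any deformation over $\mathcal{I}_\epsilon$ is given by some $F\in\mathbb{C}\{x,y\}\Hat{\otimes}\Pi$, i.e.\ $F=F_0+\epsilon F_1$ with $F_0=hf$ for a unit $h$; multiplying by the unit $h^{-1}$ (an $x$-isomorphism fixing $y$ and the base) we may assume $F=f+\epsilon g$ with $g\in\mathbb{C}\{x,y\}$. Such an $F$ automatically defines a flat, hence genuine, deformation because $f$ is a non-zero-divisor in the integral domain $\mathbb{C}\{x,y\}$, so the assignment $F\mapsto g$ is defined on all infinitesimal $x$-deformations. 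Composing with the quotient map gives a candidate map $F\mapsto[g]\in\mathbb{C}\{x,y\}/\langle f,\tfrac{\partial f}{\partial y}\rangle$, which is visibly surjective since every $g$ arises.

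The heart of the argument is to compute the effect of an $x$-isomorphism on the first-order term. Because the base $\mathcal{I}_\epsilon$ is a single point, an $x$-isomorphism between $f+\epsilon g$ and $f+\epsilon\tilde g$ is, over the fixed base, given by data as in \eqref{eq:mor}: a substitution $u(x,y,\epsilon)=y+\epsilon v(x,y)$ with $u(x,y,0)=y$, and a unit $H(x,y,\epsilon)=1+\epsilon w(x,y)$ with $H(x,y,0)=1$, satisfying $H(x,y,\epsilon)\,(f+\epsilon\tilde g)\big(x,u(x,y,\epsilon)\big)=(f+\epsilon g)(x,y)$. Expanding modulo $\epsilon^2$, using $f(x,y+\epsilon v)=f(x,y)+\epsilon v\,\tfrac{\partial f}{\partial y}(x,y)$ and $\tilde g(x,y+\epsilon v)=\tilde g(x,y)\pmod\epsilon$, the order-zero term reads $f=f$ and the order-$\epsilon$ term reads
\[
  g=\tilde g+v\,\frac{\partial f}{\partial y}+w\,f.
\]
Hence $f+\epsilon g$ and $f+\epsilon\tilde g$ are $x$-isomorphic precisely when $g-\tilde g\in\langle f,\tfrac{\partial f}{\partial y}\rangle$: the forward direction shows that $F\mapsto[g]$ is well defined on $x$-isomorphism classes and injective, while the converse follows by reading the displayed identity as the recipe for $(v,w)$, which then produces an explicit $x$-isomorphism. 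This establishes the claimed bijection.

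The step I expect to require the most care is pinning down the precise form of an $x$-isomorphism and the role of the base. The $x$-morphism condition $\tilde x\circ\psi=x$ forces the first coordinate to be preserved, so no substitution $x\mapsto x+\epsilon(\cdots)$ is permitted; this is exactly why only $\tfrac{\partial f}{\partial y}$ — and not $\tfrac{\partial f}{\partial x}$ — enters, in contrast to the classical Tjurina algebra $\mathbb{C}\{x,y\}/\langle f,\tfrac{\partial f}{\partial x},\tfrac{\partial f}{\partial y}\rangle$ of \cite[Theorem~10.1.5]{dJP:00}, where the extra freedom $x\mapsto x+\epsilon(\cdots)$ contributes the generator $\tfrac{\partial f}{\partial x}$. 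One must also fix the convention that isomorphisms of infinitesimal deformations are taken over $\mathcal{I}_\epsilon$, that is, with $\phi$ inducing the identity on $\Pi$; allowing a base automorphism $\epsilon\mapsto c\epsilon$ would rescale $g\mapsto cg$ and collapse the module to a projectivisation, incompatible with the vector-bundle structure used later (Theorem~\ref{l3}). With this convention the computation above is complete once one notes that $u(x,y,0)=y$ and $H(x,y,0)=1$ force the stated linear-in-$\epsilon$ forms, which is immediate from $\epsilon^2=0$.
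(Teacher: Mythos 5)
Your proof is correct and follows essentially the same route as the paper's: identify an infinitesimal deformation with $f+\epsilon g$, expand the $x$-morphism equation \eqref{eq:mor} modulo $\epsilon^2$, and read off from the $\epsilon^1$-term that two deformations are $x$-isomorphic exactly when their first-order terms agree modulo $\langle f,\tfrac{\partial f}{\partial y}\rangle$. The one place you genuinely diverge is the treatment of base automorphisms: the paper disposes of a non-trivial base map $\phi$ by asserting that any automorphism of $\mathcal I_\epsilon$ lifts to an automorphism of the infinitesimal deformation, whereas you fix the convention $\phi=\mathrm{id}$ and observe that allowing $\epsilon\mapsto c\epsilon$ would rescale $[g]\mapsto c[g]$ and collapse the classification to a projectivisation. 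Your observation is the sharper one: running your own computation with $\phi(\epsilon)=c\epsilon$ shows that an $x$-automorphism of $V(f+\epsilon g)$ covering $\epsilon\mapsto c\epsilon$ exists only when $(c-1)g\in\langle f,\tfrac{\partial f}{\partial y}\rangle$, so the paper's lifting claim is not automatic for $[g]\neq 0$, and the fixed-base convention you adopt is precisely what is needed for the linear structure on these classes exploited later in Theorem~\ref{l3}. Your added remarks — flatness of $V(f+\epsilon g)\twoheadrightarrow\mathcal I_\epsilon$ because $f$ is a non-zero-divisor in the domain $\mathbb{C}\{x,y\}$, and surjectivity of $F\mapsto[g]$ — are correct and fill in steps the paper leaves implicit via its citation of the embeddedness result.
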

\begin{proof}
 An infinitesimal deformation of $ V (f) $ is given by $ V (F) $ for $F(\epsilon)=f+\epsilon g$ and $g\in\mathbb{C}\{x,y\}$. 
Suppose we are given two infinitesimal  deformations of $ V (f) $, defined by $F_1(\epsilon)=f+\epsilon g_1$ and $F_2(\epsilon)=f+\epsilon g_2$ respectively. An $ x $-morphism  between them has the form
\[
\begin{array}{cclcl}
V (f) &\hookrightarrow& V (F_1) &\twoheadrightarrow& \mathcal I_\epsilon\\
\|&&\hspace{1mm}\downarrow\psi&&\downarrow\phi\\
V (f) &\hookrightarrow& V (F_2) &\twoheadrightarrow& \mathcal I_\epsilon.
\end{array}
\]
Note however that any automorphism $\phi$ of $\mathcal I_\epsilon $ can be lifted to an automorphism of the infinitesimal deformation. Hence for the purpose of classifying $x$-isomorphism classes of the infinitesimal deformations, we may assume that $\phi$ is the identity. An $x$-morphism for which $\phi $ is the identity is explicitly described, as in ~\eqref{eq:mor}, by  $u,H\in\mathbb{C}\{x,y\}$ such that
$$(1+\epsilon H(x,y))(f(x,y+\epsilon u(x,y))+
\epsilon g_2(x,y+\epsilon u(x,y))=f(x,y)+\epsilon g_1(x,y)$$
holds in $\mathbb{C}\{x,y\}\otimes\Pi$. The $\epsilon^0$-term and the $\epsilon^1$-term yield
\begin{align*}
f(x,y)&=f(x,y),&
H(x,y)f(x,y)+u(x,y)\frac{\partial f}{\partial y}(x,y)+g_2(x,y)&=g_1(x,y).
\end{align*}
We conclude that the two infinitesimal deformations $F_1(\epsilon)=f+\epsilon g_1$ and $F_2(\epsilon)=f+\epsilon g_2$ are $x$-isomorphic if and only if $g_1=g_2$ in $\mathbb{C}\{x,y\}/\langle f,\frac{\partial f}{\partial y}\rangle$.
\end{proof}

Our next task is to show, in Theorem~\ref{l3}~(ii), that the spaces of isomorphism classes of infinitesimal $ x $-deformations of the fibres of any $x$-deformation form a vector bundle over the base. The rank of the vector bundle is the degree of a sheaf of regular differentials with certain poles and hence is given in terms of the arithmetic genus. In the proof we shall use the following local description of the regular forms (Rosenlicht differentials) away from the poles of $\dfx$:
\begin{Lemma}\label{l2}
Let $f\in\mathbb{C}[y]\Hat{\otimes}\Sh{O}_W$ be a Weierstra{\ss} polynomial of degree $d$ with respect to $y$, whose coefficients are holomorphic functions on the open subset $x\in W\subset\mathbb{C}$. If the roots of the discriminant of $f$ are isolated in $W$, then the regular forms of the complex space $O=\{(x,y)\in W\times\mathbb{C}\mid f(x,y)=0\}$ have the following form:
$$\omega=g(x,y)\dfx\big/\frac{\partial f}{\partial y}(x,y),\,
\text{with }g\in\Sh{O}_O.$$
\end{Lemma}
\begin{proof}
By definition, a regular form is a meromorphic form whose product with any local holomorphic function $h$ has no residues \cite[Chapter~IV \S3]{Serre:88} and \cite{Rosenlicht:52}. The statement of the lemma is a local statement. Therefore it suffices to show that for each $(x_0,y_0)\in O$ there exists a ball $B(x_0,\epsilon)\subset W$ such that the lemma holds on the restriction of the Weierstra{\ss} covering $O\to W$ to the preimage $\Tilde{O}$ of $B(x_0,\epsilon)$. In particular, we may choose $B(x_0,\epsilon)$ small enough such that all the connected components of $\Tilde{O}$ contain only one point $(x_0,y_0)$ over $x_0$. Since the holomorphic functions on different connected components are independent, it suffices to prove the statement of the lemma for each connected component of $\Tilde{O}$ separately. Consequently we may assume that $\Tilde{O}\to B(x_0,\epsilon)$ is a Weierstra{\ss} covering with only one point $(x_0,y_0)$ over $x_0$. Then for a meromorphic 1-form $\omega$ on $\Tilde{O}$, the residue of $h\omega$ at $(x_0,y_0)$ is the residue at $x_0$ of the meromorphic 1-form on $B(x_0,\epsilon)$ given by the sum of $h\omega$ over the sheets of $\Tilde{O}\to B(x_0,\epsilon)$.

Since this sum of $h\omega$ is a meromorphic 1-form on $B(x_0,\epsilon)$, a meromorphic 1-form $\omega$ is regular at $(x_0,y_0)$ if and only if for all $h\in\Sh{O}_{\Tilde{O}}$, the sum of $h\omega$ over the sheets is holomorphic at $x_0$. Furthermore, due to the Weierstra{\ss} Division Theorem~\cite[Chapter I Theorem 1.1]{PR:94} it suffices to satisfy this condition for all $h=y^n$ with $n=0,\ldots,d-1$ where $d$ denotes the number of sheets.

To proceed we fix $x\in B(x_0,\epsilon)$ and derive a formula for such sums in the case of a complex polynomial $\Tilde{f}(y):=f(x,y)$ of degree $d$ with highest coefficient $1$ and pairwise different roots $y_1,\ldots, y_d$. For any polynomial $\Tilde{g}(y)$ of degree less than $d$ we have the identity
\begin{equation}\label{eq:pol 1}
\Tilde{g}(y)=\sum_{i=1}^d\Tilde{g}(y_i)\prod_{j\ne i}(y-y_j)\big/\frac{\partial\Tilde{f}_0}{\partial y}(y_i).
\end{equation}
To see this, observe that the values of both sides at $y=y_1,\ldots, y_d$ coincide. Since both sides are polynomials with respect to $y$ of degree less than $d$ they are equal. In particular, the sum $\sum_{i=1}^d\Tilde{g}(y_i)/\frac{\partial\Tilde{f}}{\partial y}(y_i)$ is equal to the coefficient of the monomial $y^{d-1}$ in $\Tilde{g}(y)$. For the monomials $\Tilde{g}(y)=1,y,\ldots,y^{d-1}$ this implies
\begin{equation}\label{eq:pol 2}
\sum_{i=1}^dy_i^n\big/\frac{\partial\Tilde{f}}{\partial y}(y_i)=\left\{
\begin{array}{ll}
0&\mbox{ if }0\le n<d-1\\
1&\mbox{ if }n=d-1.\end{array}\right.
\end{equation}
The same is true for $f$, for which the coefficients are holomorphic functions depending on $x\in B(x_0,\epsilon)$. 
We multiply any meromorphic function $g$ on $\Tilde{O}$ with the monomials $y^0,y^1,\ldots, y^{d-1}$ and subtract  from the product multiples of $f$ such that the difference again has degree less than $d$ with respect to $y$. Consequently, a meromorphic function $g$ on $\Tilde{O}$ is holomorphic if and only if for $n=0,\ldots, d-1$, the sum of $y^n gdx/\frac{\partial f}{\partial y}$ over the sheets of $\Tilde{O}\to B(x_0,\epsilon)$ is holomorphic on $B(x_0,\epsilon)$. Now setting $\omega= gdx/\frac{\partial f}{\partial y}$ finishes the proof.
\end{proof}
\begin{theorem}\label{l3}
Let $(X,\dfx)$ be a locally planar pair with prescribed poles of orders $M_1,\ldots,M_K$ at the marked points $p_1,\ldots,p_K$, respectively. Let $D=\sum_k(M_k+1)\cdot p_k$ be the pole divisor of $\dfx$ and $\Omega_X^1(D)$ the sheaf of meromorphic differentials on $X$, which are regular on $X^\circ=X\setminus\{p_1,\ldots,p_K\}$ and have poles of order at most $M_k+1$ at $p_k$ for any $k=1,\ldots,K$. Then
\begin{enumerate}
\item[(i)] The space of isomorphism classes of infinitesimal $x$-deformations of $(X,\dfx)$ is isomorphic to $H^0(X,\Omega^1_X(D)/(\Sh{O}_Xdx))$.
\item[(ii)] For any $x$-deformation $(X,\dfx)\hookrightarrow(Y,\dfx_Y)\xtwoheadrightarrow{\pi}\Sm$ of $(X,\dfx)$, the spaces of isomorphism classes of infinitesimal $x$-deformations of the fibres of $\pi$ comprise a vector bundle over $\Sm$ of rank equal to
$$\deg\Omega^1_X(D)=2g-2+\sum_{k=1}^K(M_k+1).$$
\end{enumerate}
\end{theorem}
\begin{proof}
Due to Theorem~\ref{equivalent deformation} the space of isomorphism classes of infinitesimal $x$-deformations of the pair $(X,\dfx)$ is the direct sum of the spaces of infinitesimal $x$-isomorphism classes of $V(f_1),\dots,V(f_L)$. By Lemma~\ref{l1} this is equal to
$$\bigoplus_{l=1}^L\mathbb{C}\{x_l,y_l\}/\langle f_l,\tfrac{\partial f}{\partial y_l}\rangle=\bigoplus_{l=1}^L\big(\mathbb{C}\{x_l,y_l\}/\langle f_l\rangle\big)/\langle\tfrac{\partial f_l}{\partial y_l}\rangle=\bigoplus_{l=1}^L\Sh{O}_{X,q_l}/\tfrac{\partial f_l}{\partial y_l}\Sh{O}_{X,q_l}.$$
Here $q_l$, $x_l$, $y_l$ and $f_l$ are as in Definition~\ref{def:locally planar}. By viewing both stalks $\Sh{O}_{X,q_l}$ and $\tfrac{\partial f_l}{\partial y_l}\Sh{O}_{X,q_l}$ as subsets of the stalks of the meromorphic functions, we may divide them by the meromorphic function $\frac{\partial f_l}{\partial y_l}$. The foregoing lemma gives $\Sh{O}_{X,q_l}\big(\tfrac{\partial f_l}{\partial y_l}\big)^{-1}\big/\Sh{O}_{X,q_l}\simeq\Omega^1_{X,q_l}(D)/(\Sh{O}_{X,q_l}dx_l)$. Since $\{q_1,\ldots,q_L\}$ is the support of $\Omega^1_X(D)/\Sh{O}_Xdx$, part~(i) follows. The choice of the local germs $y_l $ is not unique, as we may postcompose $(x_l,y_l)$ with any biregular automorphism of $\mathbb{C}^2$ that preserves the first coordinate. For given $y_l$ the germ $f_l$ has a further ambiguity of multiplication by a unit. However the resulting spaces $\mathbb{C}\{x,y\}\big/\langle f_l,\tfrac{\partial f_l}{\partial y_l}\rangle $ of $ x $-isomorphism classes of infinitesimal deformations (Lemma~\ref{l1}) are isomorphic.

The statement in part~(ii) is local on $\Sm$. Hence we may assume as in the proof of Theorem~\ref{equivalent deformation} that $\Sm$ is smooth and $Y_l$ is for any $l=1,\ldots,L$ given by the zero set of $F_l\in\mathbb{C}\{x_l,y_l\}\Hat{\otimes}\Sh{O}_{\Sm}$. Let $Z_l\subset Y_l$ be the zero set of $\frac{\partial F_l}{\partial y_l}$. We claim that for sufficiently small $\Sm$ the following map is flat:
$$\pi':Z=Z_1\cup\ldots\cup Z_L\hookrightarrow Y\xtwoheadrightarrow{\pi}\Sm.$$
In fact, for sufficiently small $\Sm$ the fibres of this map are the union of the singularities of the corresponding fibres of $\pi$ together with the smooth roots of the restriction of $\dfx_Y$ to the corresponding fibres of $\pi$. Hence the map $\pi'$ is finite. As proven in \cite[Chapter II Proposition 2.10]{PR:94}, a finite holomorphic map $\pi':Z\twoheadrightarrow\Sm$ to a reduced complex space is flat precisely when the sum of the dimensions of the quotients of $\Sh{O}_Z$ divided by the functions which vanish on the fibres $(\pi')^{-1}[\{s\}]$ of $\pi'$ is locally constant on $s\in\Sm$. Due to part~(i), at $s\in\Sm$ this dimension is equal to
\begin{gather}\label{deg dim}
\deg\Omega_{\pi^{-1}[\{s\}]}(D)=2g-2+\deg(D)=2g-2+\sum_{k=1}^K(M_k+1).
\end{gather}
Here $g$ is the arithmetic genus of $\pi^{-1}[\{s\}]$. Due to the Semi-Continuity Theorem~\cite[Chapter III Theorem 4.7c]{PR:94} this arithmetic genus is constant on $s\in\Sm$ and hence $\pi'$ is flat. Furthermore $\pi'$ is proper, since $\pi$ is proper and $Z$ is closed in $Y$ The Semi-Continuity Theorem also states \cite[Chapter III Theorem 4.7d]{PR:94} that the direct image of the structure sheaf under a proper flat map is a vector bundle. Applying this result here yields that the direct image of the structure sheaf of $Z$ with respect to $\pi'$ is the sheaf of holomorphic sections of a holomorphic vector bundle on $\Sm$ of rank~\eqref{deg dim}. Due to part~(i) and by definition of $Z$, this is the statement that the isomorphism classes of infinitesimal $x$-deformations form a vector bundle on $\Sm$.
\end{proof}
For each $ l = 1,\ldots , L $ we utilised above a choice of $ y_l\in\Sh{O}_{X_l, q_l}$ such that $y_l (q_l) = 0  $ and $ (x_l, y_l): X_l\rightarrow V(f_l) $ is biregular for some $ f_l\in\mathbb C\{x, y\} $. This $y_l $ is not unique: for any biregular $v:\C\rightarrow\C $, we may instead take $v\circ y_l $. The corresponding spaces $\mathbb{C}\{x,y\}\big/\langle f_l,\tfrac{\partial f_l}{\partial y_l}\rangle $ of $ x $-isomorphism classes of infinitesimal deformations (Lemma~\ref{l1}) are  isomorphic.

\begin{lemma}
	\label{prep-for-univ-deformation-construction}
	Take $\delta_1>0$ and a Weierstraß polynomial $f \in \mathbb{C}[y]\Hat{\otimes}\Sh{O}_{B(0,\delta_1)}$ such that
	\begin{itemize}
		\item $f$ has highest coefficient $1$ and lower coefficients vanishing at $x=0$,
		\item $q=(0,0)$ is the unique point in $B(0,\delta_1)\times \mathbb{C}$ so that $f(q) = \tfrac{\partial f}{\partial y}(q)=0$.
	\end{itemize}
	Then
	$\mathbb{C}\{x,y\}/\langle f,\tfrac{\partial f}{\partial y}\rangle \simeq \mathbb{C}\{x\}[y]/\langle f,\tfrac{\partial f}{\partial y}\rangle\simeq\mathbb{C}[y]\Hat{\otimes}\Sh{O}_{B(0,\delta_1)}/\langle f,\tfrac{\partial f}{\partial y}\rangle$ and these spaces are finite-dimensional.
\end{lemma}

\begin{proof}
Applying the Weierstra{\ss} Divison Theorem \cite[Chapter I Theorem 1.1]{PR:94}, we have that the quotient $\mathbb{C}\{x,y\}/\langle f\rangle$ is the same as $\mathbb{C}\{x\}[y]/\langle f\rangle$. This implies $\mathbb{C}\{x,y\}/\langle f,\frac{\partial f}{\partial y}\rangle\simeq\mathbb{C}\{x\}[y]/\langle f,\frac{\partial f}{\partial y}\rangle$. 

The complex space $U=V(f)$ is a Weierstra{\ss} covering over $x\in B(0,\delta_1)$. We have $\mathbb{C}[y]\Hat{\otimes}\Sh{O}_{B(0,\delta_1)}/\langle f\rangle \simeq H^0(U,\Sh{O}_U)$ and therefore it follows that also $\mathbb{C}[y]\Hat{\otimes}\Sh{O}_{B(0,\delta_1)}/\langle f,\tfrac{\partial f}{\partial y}\rangle \simeq H^0\bigr(U,\Sh{O}_U/(\tfrac{\partial f}{\partial y } \Sh{O}_U)\bigr)$. By the second assumption, the sheaf $\Sh{O}_U/(\tfrac{\partial f}{\partial y } \Sh{O}_U)$ is a skyscraper sheaf with support at $(0,0)$. This implies the second isomorphism in the lemma and that the quotients are finite-dimensional.
\end{proof}

We now explain how any basis for the isomorphism classes of infinitesimal deformations of a locally planar $(X,\dfx) $ induces a local $ x $-deformation of $ (X,\dfx)$, which we later prove to be universal. 
Let $(X,\dfx) $ be locally planar, and let $q_l$, $x_l$, $y_l$ and $f_l$ be as in Definition~\ref{def:locally planar} for $l=1,\ldots,L$.
 By the Weierstra{\ss} Preparation Theorem \cite[Chapter I Theorem 1.4]{PR:94} we may replace the $f_l$ by Weierstra{\ss} polynomials in $\mathbb{C}[y]\Hat{\otimes}\Sh{O}_{B(0,\delta_1)}$ of degree $d_l$ in $y$, for some $\delta_1>0$.
The common roots of $f_l$ and $\tfrac{\partial f_l}{\partial y}$ are either singularities of $X$ or smooth zeros of $\dfx$. Therefore the conditions of Lemma~\ref{prep-for-univ-deformation-construction} are satisfied if we choose $\delta_1$ small enough so that the balls $B(0,\delta_1)$ do not contain more than one of the points $q_l$. 

By Lemmata~\ref{l1} and \ref{prep-for-univ-deformation-construction}, the isomorphism classes of infinitesimal deformations of $(X,\dfx) $ are given by  $ \bigoplus _{l = 1} ^ L \mathbb{C}[y]\Hat{\otimes}\Sh{O}_{B(0,\delta_1)}/\langle f_l,\tfrac{\partial f_l}{\partial y}\rangle$ and this space is finite-dimensional.  For each $l=1,\ldots,L$ then, we choose a basis of $\mathbb{C}[y]\Hat{\otimes}\Sh{O}_{B(0,\delta_1)}/\langle f_l,\tfrac{\partial f_l}{\partial y}\rangle$, represented by tuples 
\begin {equation}\label {equation:basis}
 g_l=(g_{l,1},\ldots, g_{l,r_l})\in(\mathbb{C}[y]\Hat{\otimes}\Sh{O}_{B(0,\delta_1)})^{r_l}.
\end {equation}
 By the Weierstra{\ss} Division Theorem, we may choose the degrees of the $g_{l,r_i}$ to be at most $d_l-1$. We further choose small open balls $\Tm_l\subset\mathbb{C}^{r_l}$ such that for all $t_l\in\Tm_l$, the roots of the discriminant with respect to $y$ of the following polynomials $G_l$
\begin{align} 
	G_l(x,y,t_l)&=f_l(x,y)+t_{l,1}g_{l,1}(x,y)+\ldots+t_{l,r_l}\label {eq:G}
	g_{l,r_l}(x,y)\\&=:f_l(x,y)+t_l\cdot g_l(x,y)\nonumber
\end{align}
belong to $x\in B(0,\delta_1/2)$. Thereby via  \eqref {eq:G}
each $ g_l $ defines a deformation
\begin {equation}\label{eq:deformation}
U_l\rightarrow V_l \twoheadrightarrow \mathcal T =\mathcal T_1\times\cdots\mathcal \times \mathcal{T}_L 
\end {equation}
where
\begin{align}\label{eq:zl}
	V_l&=\{(x,y,t_1,\ldots,t_L)\in B(0,\delta_1)\times\mathbb{C}\times\Tm\mid G_l(x,y,t_l)=0\}.
\end{align}
Each $ V_l $ is a Weierstra{\ss} covering with $ d_l $ sheets over $x\in B(0,\delta_1)\times\Tm$, unbranched over $\Am\times\Tm$ with $\Am=B(0,\delta_1)\setminus\overline{B(0,\delta_1/2)}$. These deformations satisfy the conclusion of Claim~1 of Theorem~\ref{equivalent deformation}, and hence Claim~2 of Theorem~\ref{equivalent deformation} is applicable.

\begin {definition}\label{universal1} 
By Claim 2 of Theorem~\ref{equivalent deformation}, we may glue the deformations~\eqref{eq:deformation} with the trivial deformation to obtain an $x$-deformation 
\begin {equation} (X,\dfx)\hookrightarrow(Z,\dfx_Z)\twoheadrightarrow\Tm \; .  \label{eq:universal representative}
\end {equation}
We call this the $ x $-deformation  of $ (X, \dfx) $ defined by the choice of basis $ g_l,\, l = 1,\ldots , L $ \eqref{equation:basis} for the isomorphism classes of infinitesimal $ x $-deformations of  $ (X, \dfx) $. The corresponding local $ x $-deformation  of $ (X, \dfx) $ is denoted by
\begin{equation} \label{eq:universal 1}
	(X,\dfx)\hookrightarrow(Z,\dfx_Z)\twoheadrightarrow\Tm_0.
\end{equation}
\end{definition}
We shall show in Theorem~\ref{t4} that this local $ x $-deformation is universal.

If in addition $(X,dx)$ is real with respect to an anti-holomorphic involution $\eta$ (Definition~\ref{definition:single}(c)), then we can choose the basis $g_1,\dotsc,g_L$ so that the $x$-deformation \eqref{eq:universal representative} is also real with respect to $\eta$ (Definition~\ref{def2}(c)). Indeed, the maps $(x_l,y_l):X_l\to\mathbb{C}^2$ obey \eqref{eq:real}. We define an action $l\mapsto\eta l$ of $\eta$ on $\{1,\ldots,L\}$ such that $\eta(q_l)=q_{\eta l}$. Due to \eqref{eq:real} the unique Weierstra{\ss} polynomials $f_1,\ldots,f_L$ obey
$$f_{\eta l}(x,y)=\Bar{f}_l(\Bar{x},\Bar{y}).$$
We choose the basis $g_1,\ldots,g_L$ so that:
\begin{equation}\label{eq:real 2}
g_{\eta l}(x,y)=\Bar{g}_l(\Bar{x},\Bar{y}).
\end{equation}
In particular, for $\eta l=l$ the coefficients of $g_l$ take real values for real $x_l\in B(0,\delta_1)$. Consequently the deformation~\eqref{eq:universal representative} obeys condition~(c) in Definition~\ref{def2} with $\eta(t_l)=\Bar{t}_{\eta l}$ also denoted by $\eta$.
\section{Decomposition of holomorphic functions}\label {sec:decomposition}
 In this section we prove a decomposition result which we shall use in our proof that for each choice of basis $g_1,\ldots,g_L$ for the isomorphism classes of infinitesimal deformations of $ (X,\dfx) $ as in  \eqref {equation:basis}, the local $ x $-deformation \eqref{eq:universal 1} is universal. Universality means that given a local $ x $-deformation $(X,\dfx)\hookrightarrow(Y, \dfx_Y)\twoheadrightarrow\mathcal S_0 $,  there exists a unique holomorphic map $\phi:\mathcal S_0\rightarrow\mathcal T_0 $ such that the pullback of  \eqref {eq:universal 1} under $\phi $ is isomorphic to the given local $ x $-deformation.  

As explained in Theorem~\ref{equivalent deformation}, such an isomorphism is determined by $x$-isomorphisms
$$\begin{array}{ccccl}
V(f_l)&\hookrightarrow& V(F_l)&\twoheadrightarrow&\Sm_0\\
\| &&\downarrow&&\downarrow\\
V(f_l)&\hookrightarrow&V(G_l)&\twoheadrightarrow&\Tm_0
\end{array}$$
of deformations of the corresponding germs $ X_l = X_{q_l} $ for $l=1,\ldots, L$. Explicitly, for each $l=1,\ldots, L$ these $x$-morphisms are comprised  of a triple $(\phi_l,u_l,H_l)$, as in \eqref{eq:mor}, such that
\begin {equation}\label {equation:morphism}
H_l (x,y,s)G_l (x,u_l (x,y,s),\phi_l(s))=F_l (x,y,s),
\end {equation}
where $ G_l $ was defined in~\eqref {eq:G}. 
We shall differentiate~\eqref{equation:morphism} with respect to $s$ in order to derive a vector field for triples $(\phi_l,u_l,H_l)$ and then apply the Picard-Lindel\"of Theorem. 
In the next lemma we prove that the derivative of the right hand side in~\eqref{equation:morphism} with respect to $s$ can be written uniquely in the form $a\cdot G_l + b \cdot \frac {\partial G_l} {\partial u_l} + c\cdot g_l$, where the properties of $a,b,c$ will be specified below. Taking the derivative of the left hand side of~\eqref{equation:morphism}, this decomposition allows us to express the derivatives of $\phi_l$, $u_l$ and $H_l$ in terms of $a,b,c$.


We supplement the choice of $\delta_1>0$ in the proof of Theorem ~\ref{equivalent deformation} by a $\delta_2>0$ chosen so that for each $l=1,\ldots,L$, the complex space $U_l$ in~\eqref{eq:map b2} is a subset of the multidisc $(x,y)\in B(0,\delta_1)\times B(0,\delta_2)$. Furthermore, we choose the balls $\Tm_l\subset\mathbb{C}^{r_l}$ sufficiently small such that $V_l$~\eqref{eq:zl} is a subset of the multidisc $(x,y,t_l)\in B(0,\delta_1)\times B(0,\delta_2)\times\Tm_l$. Then 
let $\mathfrak {H}$ denote the Banach space of bounded holomorphic functions on $(x,y)\in B(0,\delta_1)\times B(0,\delta_2)$ with the supremum norm $\|\cdot\|_\infty$. We define
$$\mathfrak {H}_l:=\big\{u\in\Sh{O}_{B(0,\delta_1)}\Hat{\otimes}\mathbb{C}[y]\cap\mathfrak{H}\mid\deg u<d_l\big\},$$ 
where $d_l$ is the degree of the Weierstra{\ss} covering
$ x: V_l \to B (0,\delta_1)\times\mathcal T_l$.

\begin{Lemma}\label{l5}
Let $g_l$ be given as in~\eqref {equation:basis} and let $G_l, \delta_1,\delta_2,\mathcal{T}_l,\|\cdot\|_\infty,\mathfrak{H},\mathfrak{H}_l$ be as above. There exist $\epsilon_1,\epsilon_2 > 0 $ such that for any $(u,t_l)\in\mathfrak{H}_l\times\Tm_l$ with $\|u-y\|_\infty<\epsilon_2 $ and $ |t_l| <\epsilon_1 $, every $h\in\mathfrak{H}$ has a unique decomposition as
\begin{gather}\label{f3}\begin{aligned}
h (x, y)=&\\a(x,y)G_l&(x,u(x,y),t_l)+b(x,y)\frac{\partial G_l}{\partial u}(x,u(x,y),t_l)+c\cdot g_l(x,u(x,y)),
\end{aligned}\end{gather}
where $ (a,b,c)\in\mathfrak {H}\times \mathfrak {H}_l\times \mathbb{C}^{r_l} $.
Furthermore $(a,b,c)$ depends holomorphically on such $(h,u,t_l)\in\mathfrak {H}\times\mathfrak {H}_l\times\Tm_l$.
\end{Lemma}
\begin{proof}
In the Lemma we fix $l\in\{1,\ldots,L\}$ and hence we omit the index $l$ of $u_l,\, h_l$ and $F_l$ appearing in~\eqref{equation:morphism}.
Since $u$ is a polynomial with respect to $y$, for small $\|u-y\|_\infty$ the derivative $\frac{\partial u}{\partial y}-1$ is small. This means that for $(x,y)\in B(0,\delta_1)\times B(0,\delta_2)$, the maps $v\mapsto u(x,v)$ and $v\mapsto y-(u(x,v)-v)$ are contractions on $B(0,\delta_2)$. Therefore, by Banach's Fixed Point Theorem, for all $(x,y)\in B(0,\delta_1)\times B(0,\delta_2)$ the map $v\mapsto y-(u(x,v)-v)$ has a unique fixed point $v(x,y)$ on $ B(0,\delta_2)$, obtained by iterating this map with the initial input $v=0$. Hence the map $(x,y)\mapsto (x,u(x,y))$ is a biholomorphism from an open subset $U$ of $\mathbb{C}^2$ onto $B(0,\delta_1)\times B(0,\delta_2)$, with inverse map $\varphi: (x,y)\mapsto(x,v(x,y))$. The function $\Tilde{h}(x,y)=h(x,v(x,y))$ is holomorphic on $B(0,\delta_1)\times B(0,\delta_2)$ and bounded.
For $ t_l\in\mathcal T_l$ we set
\begin{align*}
U_l(t_l) &:= \big\{ (x,y) \in B(0,\delta_1)\times B(0,\delta_2) \mid G_l(x,y,t_l) = 0 \big\}.
\end{align*}
Due to Theorem~\ref{l3}~(ii), for sufficiently small $ t_l\in\mathcal T_l$,  the $g_l$ form a basis of  $\Sh{O}_{U_l(t_l)}\big/\frac{\partial G_l}{\partial y}(x,y,t_l)\Sh{O}_{U_l(t_l)}$.
Hence there exists a unique $c\in\mathbb{C}^{{r_l}}$ such that
\begin{align}\label{eq:unique c}
\left(\Tilde{h}(x,y)-c\cdot g_l(x,y)\right)\big/\frac{\partial G_l}{\partial y}(x,y,t_l)
\end{align}
is holomorphic on the complex space $U_l(t_l)$. 
The space $U_l(t_l)$ is by definition a $ d_l $-fold Weierstra{\ss} covering over $x\in B(0,\delta_1)$. For each fixed $x$, the map $y\mapsto\varphi(x,y)$ takes all $d_l$ roots of $G_l(x,y,t_l)$ into $\{v\in\mathbb{C}\mid (x,v)\in U\}$. Hence the image $\varphi[U_l(t_l)] \subset U$ is also a $ d_l $-fold Weierstra{\ss} covering over $x\in B(0,\delta_1)$. For sufficiently small $|t_l|$ and $\|u-y\|_\infty$, the polynomial $y\mapsto G_l(x,u(x,y),t_l)$ has exactly $ d_l $ roots in $y\in B(0,\delta_2)$, or equivalently
\begin{align}\label{eq U subset ball}
\varphi[U_l(t_l)]&\subset B(0,\delta_1)\times B(0,\delta_2).
\end{align}
In particular, there exists a unique Weierstra{\ss} polynomial $F\in\mathfrak{H}$ such that
\[ \varphi\big[U_l(t_l)\big]  =\big\{(x,y)\in B(0,\delta_1)\times B(0,\delta_2)\mid F(x,y)=0\big\}.\]
 We conclude that the function
\begin{equation}\label{eq:b}
\left(h(x,y)-c\cdot g_l(x,u(x,y))\right)\big/\frac{\partial G_l}{\partial u}(x,u(x,y),t_l)
\end{equation}
is holomorphic on $ \varphi[U_l (t_l)] $. By the Weierstra{\ss} Division Theorem \cite[Chapter I Theorem 1.1]{PR:94}, this function is equal to a unique $b\in\mathfrak {H}_l $ on the zero set of $F$. Hence
$$h(x,y)-c\cdot g_l(x,u(x,y))-b(x,y)\frac{\partial G_l}{\partial u}(x,u(x,y),t_l)$$
vanishes on 
$$\varphi\big[U_l (t_l)\big]=\big\{(x,y)\in B(0,\delta_1)\times B(0,\delta_2)\mid G_l(x,u(x,y),t_l)=0\big\}.$$
Since $ \varphi ^ {- 1} $ is a biholomorphism from this space
to the reduced space $ U_l (t_l) $, 
$\varphi[U_l (t_l)] $ is reduced. Therefore there exists a unique  $a\in\mathfrak {H}$ so that $ (a, b, c) $ solves \eqref{f3}.

It remains to prove that such triples $(a,b,c)$ depend holomorphically on $(h,u,t_l)$. For sufficiently small $\|u-y\|_\infty$ and $(x,y)\in B(0,\delta_1)\times B(0,\delta_2)$, the unique fixed point of $v\mapsto y-(u(x,v)-v)$ is obtained by iterating this map starting with $v=0$. These iterations depend holomorphically on $(x,y)$ and converge uniformly in $\mathfrak {H}$ to a map $v$ such that $(x,y)\mapsto(x,v(x,y))$ is the inverse of $(x,y)\mapsto(x,u(x,y))$. Therefore $v\in\mathfrak {H}$ depends holomorphically on $u$ and $\Tilde{h}(x,y)=h(x,v(x,y))$ depends holomorphically on $u$ and $h$.

For each $l=1,\ldots,L$ the function $\frac{\partial f_l}{\partial y}(x,y)$ has  only one root on $U_l$~\eqref{eq:map b2},  at $ (0, 0) $. We know from Lemma~\ref{l2} that Equivalently, we establish a one-to-one correspondence between isomorphism classes of local $x$-deformations of $(X,\dfx)$ and $x$-isomorphism classes of deformations of the corresponding $V(f_l)$. A function $g\in\Sh{O}_{U_l}$ belongs to the ideal $\langle f_l,\frac{\partial f_l}{\partial y}\rangle$ of $\mathbb{C}[y]\Hat{\otimes}\Sh{O}_{B(0,\delta_1)}$ if and only if $g/\frac{\partial f_l}{\partial y}$ is holomorphic. By Lemma~\ref{l2} this holds if and only if for all $f\in\Sh{O}_{U_l}$ the residue of $gf(\frac{\partial f_l}{\partial y})^{-2}\dfx$ on $X_l$ vanishes. Moreover, for $f\in\frac{\partial f_l}{\partial y}\Sh{O}_{U_l}$ this pairing for $(f,g)$ vanishes. Therefore we obtain the following non-degenerate pairing
\begin{align}\label{eq:pairing}
\Sh{O}_{U_l}\big/\frac{\partial f_l}{\partial y}\Sh{O}_{U_l}\times \Sh{O}_{U_l}\dfx\big/\frac{\partial f_l}{\partial y}\Sh{O}_{U_l}\dfx&\rightarrow\C,&
(f,g)&\mapsto\Res_{U_l} gf(\frac{\partial f_l}{\partial y})^{-2}\dfx.
\end{align}
Using this pairing, the basis $g_{l,1},\ldots,g_{l,{r_l}}$ of $ \Sh{O}_{U_l}\big/\frac{\partial f_l}{\partial y}\Sh{O}_{U_l} $ induces a dual basis  $h_{l,1},\ldots,h_{l,{r_l}}\in\mathbb{C}[y]\Hat{\otimes}\Sh{O}_{B(0,\delta_1)}$ of $ 
\Sh{O}_{U_l}\dfx\big/\frac{\partial f_l}{\partial y}\Sh{O}_{U_l}\dfx$.
The ${r_l}\times {r_l}$ matrix
\begin{align*}
B_{ij} (t_l) =\Res_{U_l(t_l)}\left(g_{l,i}h_{l,j}\dfx\big/\left(\frac{\partial G_l}{\partial y}\right)^2\right)
\end{align*}
depends holomorphically on small $t_l\in\Tm_l$ and stays nearby $\unity$. In particular $B$ is invertible. For small $t_l\in\Tm_l$, the proof of Theorem~\ref{l3} shows that the basis $h_{l,1},\ldots,h_{l,{r_l}}$ of $\Sh{O}_{U_l}\dfx\big/\frac{\partial f_l}{\partial y}\Sh{O}_{U_l}\dfx$ defines also a basis of $\Sh{O}_{U_l(t_l)}\dfx/\frac{\partial f_l}{\partial y}\Sh{O}_{U_l(t_l)}\dfx$.
Taking the residue on $U_l(t_l)$, the pairing  \eqref {eq:pairing} remains non-degenerate 
and is represented with respect to the bases $ g_l, h_l$ by the matrix $B (t_l) $. Hence $c$ is given by
\begin{align}\label{eq:formula c}
c_i=\sum_{j=1}^rB^{-1}_{ij}\Res_{U_l(t_l)}\left(\Tilde{h}h_{l,j}\dfx\big/\left(\frac{\partial G_l}{\partial y}\right)^2\right)
\end{align}
and depends holomorphically on $h$, $u$ and $t_l$.

For $x\in B(0,\delta_1)$ the roots of $y\mapsto G_l(x,u(x,y),t_l)$ are equal to the roots of the Weierstra{\ss} polynomial $y\mapsto F(x,y)$.
For $n=0,\ldots, d_l -1$ the sums of the $n$-th powers of these roots are equal to
\begin{align*}
\Res_{y\in B(0,\delta_2)}
\frac{\partial G_l}{\partial u}(x,u(x,y),t_l)\frac{\partial u}{\partial y}(x,y)y^ndy\big/G_l(x,u(x,y),t_l).
\end{align*}
For the same $x\in B(0,\delta_1)$, the coefficients of the Weierstra{\ss} polynomials $F$ are  polynomials with respect to these residues~\cite[(2.14)]{Macdonald:79}. In particular, these coefficients depend holomorphically on $(x,u,t_l)\in B(0,\delta_1)\times\mathfrak {H}_l\times\Tm_l$.

We now show that the coefficients of $b$ depend holomorphically on $(x,h,u,t_l)\in B(0,\delta_1)\times\mathfrak {H}\times\mathfrak {H}_l\times\Tm_l$. For any function $g:\mathbb{C}\to\mathbb{C}$, the right hand side of~\eqref{eq:pol 1} defines a polynomial of degree $d-1$,  which takes the same values as $g$ at the roots $y_1,\ldots,y_d$ of $f(y)$. Hence for such $g$, the difference of the left hand side and the right hand side of~\eqref{eq:pol 1} is always a multiple of $f$. We claim that the coefficients of the right hand side are polynomials in the coefficients of the polynomials $f$ and in the expression
\begin{equation}\label{eq:sum}
\sum_{i=1}^dg(y_i)y_i^n\big/\frac{\partial f}{\partial y}(y_i)\quad
0\le n\le d-1.
\end{equation}
In order to prove the claim we may assume that $g(y)$ is a polynomial of degree $d-1$. Due to~\eqref{eq:pol 2} the highest coefficient, i.e. that of $y^{d-1}$, is equal to~\eqref{eq:sum} for $n=0$. Inductively the difference of $y^ng(y)$ and a multiple of $f(y)$ has degree less than $d$. Consequently the coefficient of $y^{d-1-n}$ in $ g $ is equal to the difference of~\eqref{eq:sum} and a polynomial with respect to the coefficients of $f$ and higher coefficients of $g$. This proves the claim.

Given $x\in B(0,\delta_1)$ we apply this claim to the function $g(y)$ given by~\eqref{eq:b}. The corresponding expression~\eqref{eq:sum} is equal to
$$\Res_{y\in B(0,\delta_2)}\left(h(x,y)-c\cdot g_l(x,u(x,y))\right)\frac{\partial u}{\partial y}(x,y)y^ndy\big/G_l(x,u(x,y),t_l).$$
Hence the coefficients of $F$ and of $b$ depend holomorphically on $(x,h,u,t_l)\in B(0,\delta_1)\times\mathfrak{H}\times\mathfrak{H}_l\times\Tm_l$.

Finally, given $(u,h,t_l)$,  $c$ and $b$, $a(x,y)$ is equal to
$$\left(h(x,y)-b(x,y)\frac{\partial G_l}{\partial u}(x,u(x,y),t_l)+c\cdot g_l(x,u(x,y))\right)\big/G_l(x,u(x,y),t_l).$$
By choice of $b$ and since $U_l(t_l)$ (as defined in \eqref{eq:deformation}) is reduced, this expression belongs to $\mathfrak{H}$ and depends holomorphically on $(h,u,t_l)\in\mathfrak {H}\times\mathfrak{H}_l\times\Tm_l$.

Finally, we choose $\epsilon_1>0$ and $\epsilon_2>0$ such that the $c$ in~\eqref{eq:unique c} and~\eqref{eq:formula c} is unique and depends holomorphically on $(h,u,t_l)$, \eqref{eq U subset ball} holds and the $g_l$ form a basis of $\Sh{O}_{U_l(t_l)}\big/\frac{\partial G_l}{\partial y}(x,y,t_l)\Sh{O}_{U_l(t_l)}$ for $|t_l|<\epsilon_1$ and $\|u-y\|_\infty<\epsilon_2$.
\end{proof}
\section {Existence of a universal deformation}\label{sec:existence}
In this section we shall prove our main theorem.

\begin{theorem}\label{t4}
Let  $ g_1,\ldots, g_L $ be a basis for the isomorphism classes of infinitesimal $ x $-deformations of  a locally planar pair 
$ (X, d x) $ and 
\begin{align}\tag{\ref{eq:universal representative}}
(X,\dfx)\hookrightarrow(Z,\dfx_Z)\twoheadrightarrow\Tm
\end{align}
the corresponding $ x $-deformation, constructed in Definition~\ref{universal1}.
Then the local $ x $-deformation
\begin{align}\tag{\ref{eq:universal 1}}
(X,\dfx)\hookrightarrow(Z,\dfx_Z)\twoheadrightarrow\Tm_0
\end{align}
is a universal object in the category of local $ x $-deformations under morphism. This means that any local $x$-deformation 
$(X,\dfx)\hookrightarrow(Y, d x_Y)\twoheadrightarrow\mathcal S_0 $
has a representative
$(X,\dfx)\hookrightarrow(Y, d x_Y)\twoheadrightarrow\mathcal S $
and a unique holomorphic map $\phi:\Sm\to\Tm$ which pulls back~\eqref{eq:universal representative} to a $x$-deformation isomorphic to that representative.
\end{theorem}
 In particular,  the isomorphism class of   the local $ x $-deformation \eqref {eq:universal 1}  does
not depend on the choice of the basis $g_l$ for the infinitesimal $ x $-deformations of  $ (X, \dfx) $. This includes that it does not depend upon the choice of local germs $ y_l $ and the corresponding $f_l\in\C\{x, y\} $ such that $ (x_l, y_l) $ maps $ X_l $ biregularly onto $ V (f_l) $.


\begin{proof}
Let $ (X, d x) $ be locally planar, and adopt the notation of Definition~\ref{def:locally planar}.
Let $(X,\dfx)\hookrightarrow(Y, d x_Y)\twoheadrightarrow\mathcal S_0 $
be a local $ x $-deformation of  $ (X, \dfx) $; it was shown in Theorem~\ref{equivalent deformation} that this
is
determined up to isomorphism by $x$-deformations
 $ V (f_l)\hookrightarrow V (F_l)\twoheadrightarrow\Sm_0 $
of germs $ V (f_l)\simeq X_l $ at the points $q_1,\ldots,q_L$ where $x_l$ is not a local coordinate. 
It was argued in Claim 1 of Theorem~\ref{equivalent deformation} that we may without loss of generality take a representative deformation whose base $\mathcal S $ is an open ball around $ 0\in\mathbb C ^ J $, where $J$ is the embedding dimension of $\mathcal S$.
%
Furthermore, the deformations of the space germs may be represented as
\begin{align*} 
V (F_l) &=\{(x,y,s)\in B(0,\delta_1)\times B(0,\delta_2)\times\Sm\mid F_l(x,y,s)=0\},
\end{align*}
where each $F_l(x,y,s)$ is  a Weierstra{\ss} polynomial of degree $ d_l $  in $\mathbb{C}\{x\}[y]\Hat{\otimes}\Sh{O}_{\Sm}$, and $f_l(x,y) = F_l(x,y,0)$.


The theorem calls for a holomorphic map $\phi:\Sm\to\Tm$ such that the pullback of  \eqref {eq:universal representative} is isomorphic to
the given $ x $-deformation. 
By Theorem~\ref{equivalent deformation}, this is equivalent to the construction of holomorphic maps $\phi=(\phi_1,\ldots,\phi_L):\Sm\to\Tm=\Tm_1\times\ldots\times\Tm_L$ such that for all $l=1,\ldots,L$ the pullback of $X_l\hookrightarrow Z_l\twoheadrightarrow\Tm$ with respect to $\phi_l$ is $x$-isomorphic to $X_l\hookrightarrow Y_l\twoheadrightarrow\Sm$. As in~\eqref{eq:mor}, alongside the maps $\phi_1,\ldots,\phi_L$ we shall construct units $H_1,\ldots,H_L$ and holomorphic functions $u_1,\ldots,u_L$ such that for all $l=1,\ldots,L$ we have
\begin{equation}\label{f1}
H_l(x,y,s)G_l(x,u_l(x,y,s),\phi_l(s))=F_l(x,y,s)
\end{equation}
for all $s\in\Sm$. These maps will define an $x$-morphism $(x,y,s)\mapsto(x,u_l(x,y,s), \phi_l(s))$ from the deformation $X_l\hookrightarrow Y_l\twoheadrightarrow\Sm$ to the pullback of $X_l\hookrightarrow Z_l\twoheadrightarrow\Tm$ with respect to $\phi_l$. In fact, we will show that they define an $x$-isomorphism.

We now fix $s\in \mathcal{S}$ and 
consider the straight line from $0$ to $s$ parameterised by $\lambda s$ with $\lambda \in [0,1]$.
The derivative of~\eqref{f1} with respect to $\lambda$ gives
\begin{align}\label{f2}
\frac{\mathrm{d} H_l}{\mathrm{d} \lambda}&(x,y,\lambda s)G_l(x,u_l(x,y,\lambda s),\phi_l(\lambda s))\nonumber\\
&+H_l(x,y,\lambda s)\frac{\partial G_l}{\partial u_l}(x,u_l(x,y,\lambda s),\phi_l(\lambda s))\frac{\mathrm{d} u_l}{\mathrm{d} \lambda}(x,y,\lambda s)\\
&+H_l(x,y,\lambda s)\frac{\mathrm{d} \phi_l}{\mathrm{d} \lambda}(\lambda s)\cdot g_l(x,u_l(x,y,\lambda s))\hspace{20mm}=\frac{\mathrm{d} F_l}{\mathrm{d} \lambda }(x,y,\lambda s).\hspace{-10mm}\nonumber
\end{align}

Applying Lemma~\ref{l5} for $l=1,\ldots, L$ and $\lambda \in [0,1]$ to
\begin{align*}
h(x,y)&:=H_l^{-1}(x,y,\lambda s)\frac{\mathrm{d} F_l}{\mathrm{d}\lambda}(x,y,\lambda s),&
u(x,y)&:=u_l(x,y,\lambda s),&t_l&:=\phi_{l}(\lambda s) 
\end{align*}
defines $(a_{\lambda s},b_{\lambda s},c_{\lambda s}) \in \mathfrak{H} \times \mathfrak{H}_l \times \mathbb{C}^{r_l}$ as in Equation~\eqref{f3} which depend holomorphically on $\lambda s$. We now see from \eqref{f2} that in order to determine the $x$-morphism \eqref{f1} at $s$ it suffices to solve the equations
\begin{align}\label{f4}
\frac{\mathrm{d} H_l}{\mathrm{d} \lambda}(x,y, \lambda s)&=a_{\lambda s}(x,y)H_l(x,y,\lambda s),&
\frac{\mathrm{d} u_l}{\mathrm{d} \lambda}(x,y,\lambda s)&=b_{\lambda s}(x,y),&
\frac{\mathrm{d} \phi_l}{\mathrm{d} \lambda}(\lambda s)&=c_{\lambda s}
\end{align}
at $\lambda=1$.
These equations define a holomorphic vector field along the straight line connecting $0$ with $s$. For each sufficiently small $s\in\Sm$ we apply the Picard-Lindel\"of Theorem and integrate the vector fields \eqref{f4} along these straight lines up to $s$, hence obtaining at $s$ a solution of \eqref{f1}. Because the coefficients $a_{\lambda s}, b_{\lambda s}, c_{\lambda s}$ of \eqref{f4} depend holomorphically on $s$, so do the solutions $(H_l, u_l, \phi_l)$. Hence we have constructed an $x$-morphism $(x,y,s)\mapsto(x,u_l(x,y,s), \phi_l(s))$ as in \eqref{f1}. 

We now show that after shrinking $\Sm$, the pullback of~\eqref{eq:deformation} under $\phi_l$ is $x$-isomorphic to $V(f_l)\hookrightarrow V(F_l)\twoheadrightarrow\Sm$.
We again fix $s \in \mathcal{S}$ and show that if 
$s$ is sufficiently small, then there exists 
$v_l \in\mathbb{C}\{x,y\}$ such that 
$(x,y)\mapsto(x,v_l(x,y))$ is the inverse of $(x,y)\mapsto(x,u_l(x,y,s))$. 
The Weierstra{\ss} Division Theorem \cite[Chapter I Theorem 1.1]{PR:94} proves that the map $(x,y)\mapsto(x,u_l(x,y,s))$ can be represented uniquely as a polynomial with respect to $y$ of degree less than $ d_l $ . We now use the notation introduced in the paragraph above Lemma~\ref{l5}. If $s$ is sufficiently small, then $|\frac{\partial u_l}{\partial y}-1|$ is bounded on $(x,y)\in B(0,\delta_1)\times B(0,\delta_2)$ by $\frac{1}{2}$. Hence for all fixed $x\in B(0,\delta_1)$ the map $y\mapsto u_l(x,y,s)$ has bounded inverse $y\mapsto v_l(x,y)$ from $B(0,\delta_2)$ onto an open subset of $\mathbb{C}$. Moreover the inverse function $v_l$ depends holomorphically on $s$.


It remains only to prove the uniqueness of the map $\phi $. Any map $\phi:\Sm\to\Tm$ with respect to which the pullback of  \eqref {eq:universal representative} is isomorphic to the given $ x $-deformation is described by solutions $(H_l,u_l,\phi_l)_{l\in\{1,\ldots,L\}}$ of~\eqref{f1} obeying \eqref{f2}. Hence uniqueness follows from Lemma~\ref{l5} and the Picard-Lindel\"of Theorem.
\end{proof}

\begin{remark}
Suppose that the special fibre $(X,dx)$ is real with respect to an anti-holomorphic involution $\eta$ (Definition~\ref{definition:single}(c)). At the end of Section~\ref{sec:infinitesimal} we showed that we can choose the basis $g_1,\dotsc,g_L$ so that $\eta$ lifts to an involution of the deformation~\eqref{eq:universal representative}. Furthermore, the restriction of the corresponding local $x$-deformation $(X,\dfx)\hookrightarrow(Z,\dfx_Z)\twoheadrightarrow\Tm_0$~\eqref{eq:universal 1} to the fixed point set of the action of $\eta$ on $\Tm_0$ is universal in the category of local $x$-deformations of $(X,dx)$ that are real with respect to $\eta$. 

Indeed, given an $x$-deformation $(X,\dfx)\hookrightarrow(Y, d x_Y)\twoheadrightarrow\mathcal S$, Theorem~\ref{t4} implies that the morphism $\phi$ commutes with $\eta$. Furthermore $\phi$ maps the fixed point set of $\eta$ in $\Sm$ into the fixed point set of $\eta$ in $\Tm$. 
\end{remark}
\begin{corollary}\label {corollary:universal}
Let $ (X, d x) $ be a locally planar pair and $ g_1, \ldots , g_L $ a basis for the isomorphism classes of infinitesimal $ x $-deformations of $ (X, d x) $. We consider the corresponding $x$-deformation $(X,\dfx)\hookrightarrow(Z, \dfx_Z)\xtwoheadrightarrow{\pi}\Tm$ constructed in \eqref{eq:universal representative}.
Then there exists an open neighbourhood $\mathcal{T}'$ of $0$ in $\mathcal{T}$, so that for every $t \in \mathcal{T}'$ the local $x$-deformation 
$$ (\pi^{-1}[t], dx_Z|_{\pi^{-1}[t]}) \hookrightarrow (Z,dx_Z) \xtwoheadrightarrow{\pi} \mathcal{T}'_{t} $$
is a universal local $x$-deformation of the fibre $\pi^{-1}[t]$. 
\end{corollary}
\begin{proof}
By Theorem~\ref{l3}(ii), the isomorphism classes of infinitesimal $x$-deformations of the fibres of $\pi$ comprise a vector bundle over $\mathcal{T}$. We take any open neighbourhood $\mathcal{T}'$ of $0$ in $\mathcal{T}$ over which this vector bundle is trivial. Then the basis $g_1,\dotsc,g_L$ of the fibre at $t=0$ can be extended to linearly independent sections $g_1,\dotsc,g_L$ over $\mathcal{T}'$. At each $t\in \mathcal{T}'$, these sections give a basis for the isomorphism classes of infinitesimal $ x $-deformations of $\pi^{-1}[t]$.
Clearly then by Theorem~\ref{t4} the restriction of the $ x $-deformation $(Z, \dfx_Z)\twoheadrightarrow\Tm'$ represents the local universal $ x $-deformation of all its fibres induced by $g_1,\dotsc,g_L$. 
\end{proof}
We shall henceforth take $\Tm $ to be this reduced neighbourhood, so that it enjoys the property of Corollary~\ref {corollary:universal}.


 It is a general property of universal deformations that whenever the special fibre $ (X, \dfx) $ possesses a non-trivial automorphism,
 the base space $\Tm $ of the universal deformation does not locally parameterise isomorphism classes of the fibres,  as the automorphism extends to an isomorphism of fibres over distinct elements of the base $\Tm $. We illustrate this in the present situation with an example.
 

\begin{Example}\label{e1}
Let $X$ be the projective curve $\{(X_0:X_1:X_2)\in\mathbb{P}^2\mid X_2^3=X_1^2X_0\}$ and consider the meromorphic function $x=X_1/X_0$ on $X$, whose only pole is at $p_1=(0:1:0)$ and is of order $M_1=3$. In this example there is only one point $q_1=(1:0:0)$ on $X^\circ$ at which $x$ fails to be a local coordinate. Writing $y$ for the germ of the meromorphic function $X_2/X_0$ at $q_1$, the pair $(x,y)$ maps the germ $X_{q_1}$ biregularly onto $V(f)$ with $f(x,y)=y^3-x^2\in\mathbb{C}\{x,y\}$.

Using Theorem~\ref {l3}, 
the isomorphism classes of infinitesimal $x$-deforma\-tions of $ (X,\dfx) $ are parameterised by ${\C\{x, y\}}/{\langle f, f_y\rangle} $~\eqref{eq:quotient}, for which $1,x,y,xy$ provides a basis. The corresponding $x$-deformation $(X,\dfx)\hookrightarrow(Z,\dfx_Z)\twoheadrightarrow\Tm $ is given by
$$G(x,y,t_1,t_2,t_3,t_4)=y^3-x^2+t_1+t_2x+t_3y+t_4xy . $$
As shown above, this $x$-deformation represents a universal local $x$-deformation.
Writing $q=e^{\frac{2}{3}\pi\ci}$ and using the notation of  \eqref {eq:mor},  the pairs $(H,u)=(1,qy)$ and $(H,u)=(1,q^2y)$ each define isomorphisms of $ (X,\dfx) $. They lift to  
isomorphisms between the fibres of $ (Z,\dfx_Z)\twoheadrightarrow\Tm $ over $(t_1,t_2,t_3,t_4)$, $(t_1,t_2,qt_3,qt_4)$ and $(t_1,t_2,q^2t_3,q^2t_4)$. 
 Note that the presence of these isomorphisms does not contradict the uniqueness property of the universal local deformation: this uniqueness only excludes non-trivial isomorphisms of $(Z,\dfx_Z)\twoheadrightarrow\mathcal T $ which act trivially on the fibre $(X,\dfx)$ over $0\in\Tm$. However, the isomorphisms we have constructed do not restrict to the identity automorphism of $(X,\dfx)$.

In this example, the isomorphism classes of locally planar pairs $(X,d x) $ in a neighbourhood of $0\in\Tm$ are locally parameterised not by $\Tm $ but by $\Tm/\mathbb Z_3 $, giving locally the structure of an orbifold whose
finite group action is the induced action of the automorphism group of the central fibre. 
\end{Example}
%
%
%
\section{Deformations of hyperelliptic curves with one differential}\label{sec:def1a}
Hitherto we have considered deformations of locally planar pairs $ (X, \dfx) $ with prescribed poles. 
In a forthcoming work we shall extend our results to locally planar triples $ (X, \dfx, d y) $. By locally planar triples we mean that the poles of $dy $ are also prescribed and that local primitives $x,y $ provide a local embedding of the curve into $\mathbb C ^ 2 $.  There is a particular case, arising in a number of integrable systems, for which the existence of a local universal deformation of such triples $ (X, \dfx, d y) $ follows quite easily from our results above. Namely, we consider in this section the case of locally planar pairs $ (X, \dfx) $ where $ X $ possesses a  global meromorphic function $y$ of degree two, and hence is hyperelliptic. Instead of directly requiring the existence of the global function $y$ of degree two, we consider the more general situation when $X$ has a holomorphic involution $\sigma $ such that $\sigma^\ast \dfx=-\dfx$ and the quotient $X/\sigma$ is smooth. Then the curve $X$ is hyperelliptic if $X/\sigma \simeq \mathbb{P}^1$. We further impose that the local functions $y_q$ of Definition~\ref{def:locally planar} are local parameters of this quotient:
\begin{definition}\label{def:triple}
A \emph{symmetric triple} $(X,\dfx,\sigma)$ is a locally planar pair $(X,\dfx)$ together with an involution $\sigma$ of $X$ such that
\begin{enumerate}
\item[(i)] $\sigma^\ast\dfx=-\dfx$
\item[(ii)] For each $l=1,\ldots,L$ the germ $y_l$ of Definition~\ref{def:locally planar} is a local parameter of $X/\sigma$ at $q_l$.
\end{enumerate}
The symmetric triple is called \emph{hyperelliptic}, if in addition $X/\sigma$ is biholomorphic to $\mathbb{P}^1$.
\end{definition}
Since $\sigma$ preserves the singularities, and the poles and smooth roots of $\dfx$, the sets $\{p_1,\ldots,p_K\}$ and $\{q_1,\ldots,q_L\}$ are permuted by $\sigma$. We also denote by $\sigma$ the induced involutions on $\{1,\ldots,K\}$ and $\{1,\ldots, L\}$.
Note that condition~(ii) forces $X/\sigma$ to be smooth, and that at each $q_l$ which is fixed by $\sigma$, the germ $y_l$ obeys $\sigma^\ast y_l=y_l$.

 By Lemma~\ref{lemma:uniqueness} and Theorem~\ref{equivalent deformation}, the isomorphism classes of local $x$-deformations $(X,dx)\hookrightarrow (Y,dx_Y) \twoheadrightarrow\Sm_0$ of a locally planar pair $(X,\dfx)$ are in one-to-one correspondence with the isomorphism classes of $x$-deformations $X_l\hookrightarrow Y_l\twoheadrightarrow\Sm_0$ for $l=1,\ldots,L$. We shall utilise this correspondence to directly define local $x$-deformations $ (X,\dfx, \sigma)\hookrightarrow (Y,\dfx_Y, \sigma)\twoheadrightarrow\Sm_0$, rather than introducing a notion of  $ x$-deformations of symmetric triples $(X,\dfx,\sigma)$. 

If the involution $\sigma$ of the symmetric triple $(X,\dfx,\sigma)$ lifts to an involution of $Y$ covering the trivial involution of $\Sm_0$, then for each $l=1,\ldots,L$ the map $\sigma:X_l\mapsto X_{\sigma l}$ should lift to an commutative diagram
\begin{gather}\label{deformation sigma 1}\begin{array}{lclcl}
X_l&\hookrightarrow&Y_l&\twoheadrightarrow&\Sm_0\\
\downarrow\sigma&&\downarrow\sigma&&\|\\
X_{\sigma l}&\hookrightarrow&Y_{\sigma l}&\twoheadrightarrow&\Sm_0.
\end{array}\end{gather}
For each $l=1,\ldots,L$, choose as in Theorem~\ref{equivalent deformation}, $f_l\in\mathbb{C}\{x,y\}$ and $F_l\in\mathbb{C}\{x,y\}\Hat{\otimes}\Sh{O}_{\Sm_0}$ such that $V(f_l)\hookrightarrow V(F_l)\hookrightarrow\Sm_0$ is $x$-isomorphic to $X_l\hookrightarrow Y_l\twoheadrightarrow\Sm_0$. In order that on each fibre of the deformation, $y_l$ satisfies the condition in Definition~\ref{def:triple},  we assume that these $x$-isomorphisms commute with the maps $\sigma$ in the commutative diagrams~\eqref{deformation sigma 1}-\eqref{deformation sigma 2}:
\begin{gather}\label{deformation sigma 2}\begin{array}{lclcl}
V(f_l)&\hookrightarrow&V(F_l)&\twoheadrightarrow&\Sm_0\\
\downarrow\sigma:(x,y)\mapsto(-x,y)&&\downarrow\sigma:(x,y,s)\mapsto(-x,y,s)&&\|\\
V(f_{\sigma l})&\hookrightarrow&V(F_{\sigma l})&\twoheadrightarrow&\Sm_0.
\end{array}\end{gather}
\begin{definition}\label{def:deformation sigma}
A local $x$-deformation of a symmetric triple $(X,\dfx,\sigma)$ is a local $x$-deformation $(X,\dfx)\hookrightarrow(Y,\dfx_Y)\twoheadrightarrow\Sm_0$ which satisfies:
\begin{enumerate}
\item[(i)] The involution $\sigma$ extends to an involution of $Y$ covering the trivial involution on $\Sm_0$. This extended $\sigma$ satisfies $\sigma^\ast\dfx_Y=-\dfx_Y$ and commutes with the maps $X\hookrightarrow Y\twoheadrightarrow\Sm_0$. In particular for any $l=1,\ldots,L$ we obtain the commutative diagram~\eqref{deformation sigma 1}.
\item[(ii)] For each $l=1,\ldots,L$, the induced local $x$-deformation $(X_l, x_l) \hookrightarrow (Y_l, x_{Y,l}) \twoheadrightarrow \mathcal{S}_0$ is $x$-isomorphic to a local deformation $V(f_l)\hookrightarrow V(F_l)\twoheadrightarrow\Sm_0$ such that the diagrams \eqref{deformation sigma 1} and \eqref{deformation sigma 2} commute, and the $x$-isomorphisms between them comprise a three-dimensional commutative diagram.
\end {enumerate}
\end {definition}


The morphisms of local $x$-deformations of symmetric triples $(X,dx,\sigma)$ are the morphisms of local $x$-deformations of $(X,\dfx)$ 
such that $\psi$ of Definition~\ref{def3} commutes with $\sigma$.
\begin{lemma}
Let $(X,\dfx,\sigma)$ be a symmetric triple. Then for any $l$ with $\sigma l\ne l$, $X_l$ is smooth and we may choose the local parameter $y_l$ of $X/\sigma$ such that $x_l=y_l^{r_l+1}$ and $x_{\sigma l}=-y_{\sigma l}^{r_l+1}$ for some $r_l\in\mathbb{N}$. For $l=\sigma l$ we may choose the local parameter $y_l$ of $X_l/\sigma$ such that $X_l\simeq V(f_l)$ with $f_l(x_l,y_l)=y_l^{r_l+1}-x_l^2$ and $r_l\in\mathbb{N}$.
\end{lemma}
\begin{proof}
If $l\ne\sigma l$, then the two-sheeted covering $X \to  X/\sigma$ is unbranched at $q_l$ and $q_{\sigma l}$, and therefore a local biholomorphism at these points. Hence $X_l$ is smooth at $q_l$ and $q_{\sigma l}$. Since $\dfx=d(x_l)$ has a root at $q_l$ of order $r_l\in\mathbb{N}$ there exists a local parameter of $X_l$ with $x_l=y_l^{r_l+1}$.

If $l=\sigma l$, then $X_l\to X_l/\sigma$ is a two sheeted covering, and $X_l/\sigma$ is smooth. In this case $X_l$ is preserved by $\sigma$ and the local parameter $y_l$ of $X_l/\sigma$ obeys $\sigma^\ast y_l=y_l$. Since $x_l^2$ is a function on $X_l/\sigma$ we may choose the local parameter $y_l$ such that $x_l^2=y_l^{r_l+1}$.
\end{proof}
In order to describe the deformations of symmetric triples $(X,\dfx,\sigma)$ we decompose the algebra $\mathbb{C}\{x,y\}$ into a direct sum of symmetric and anti-symmetric elements with respect to the involution $\sigma^\ast x=-x$ and $\sigma^\ast y=y$:
\begin{align*}
g&=g^++g^-&g^+&=\tfrac{1}{2}(g+\sigma^\ast g)&g^-&=\tfrac{1}{2}(g-\sigma^\ast g).
\end{align*}
We choose a basis for the isomorphism classes of infinitesimal deformations of the symmetric triple $(X,dx,\sigma)$ as follows: 
For $\sigma l=l$ we choose a basis of the symmetric part of $\mathbb{C}\{x,y\}/\langle f_l,\frac{\partial f_l}{\partial y}\rangle$. 
For $\sigma l \neq l$ we require that 
\begin{align*}
(g_{l,1},\ldots,g_{l,r})=(g_{\sigma l,1},\ldots,g_{\sigma l,r})&=(1,y_l,\ldots,y_l^{r-1}).
\end{align*}
Analogously to Definition~\ref{universal1}, this basis again defines a local $x$-deformation denoted by
$(X,\dfx)\hookrightarrow(Z_\sigma,\dfx_Z)\twoheadrightarrow\Tm_{0,\sigma}$,
whose isomorphism class 
is independent of the choice of $\sigma $-symmetric basis $ g_l $. This agrees with the restriction of~\eqref{eq:universal 1} to the fixed point set $\Tm_{0,\sigma}$ of the action of $\sigma$ on $\Tm_0$.
This local $x$-deformation satisfies the conditions of Definition~\ref{def:deformation sigma} and hence is a local $x$-deformation
\begin{equation}\label{eq:universal 1a}
	(X,\dfx,\sigma)\hookrightarrow(Z_\sigma,\dfx_Z, \sigma)\twoheadrightarrow\Tm_{0,\sigma}
\end{equation}
of the symmetric triple.

Our arguments carry over to the $\sigma$-symmetric case, yielding the following analogue to Theorem~\ref{t4}:
\begin{Corollary}\label{universal 2a}
Let $(X,\dfx,\sigma)$ be a symmetric triple as in Definition~\ref{def:triple}. Then for each basis $g_l$ as above, \eqref{eq:universal 1a} defines a universal local $x$-deformation 
in the category of local $x$-deformations of $(X,\dfx,\sigma)$ defined in Definition~\ref{def:deformation sigma}. 
\end{Corollary}
If $(X,\dfx,\sigma)$ is hyperelliptic, then the arithmetic genus of the quotient of the fibres by $\sigma$ is constant along a deformation. This means that all fibres are hyperelliptic with hyperelliptic involution $\sigma$.

If we additionally require $(X,\dfx,\sigma)$ to be real with respect to an anti-holomorphic involution $\eta$, as in Definition~\ref {definition:single}(c),  then $\eta$ lifts to an involution of~\eqref{eq:universal 1a}. By restricting to the fixed point set $\Tm_{0,\sigma,\mathbb{R}}$ of $\eta$ acting on $\Tm_{0,\sigma}$, \eqref{eq:universal 1a} describes a universal object in the category of local $x$-deformations of hyperelliptic triples $(X,dx,\sigma)$, where the deformations are required to be real with respect to $\eta$. 
In particular this includes the simply periodic solutions of the KdV,  
sinh-Gordon, sine-Gordon and NLS equations. 
In \cite{G-S-1} the space of spectral curves is investigated as a covering with respect to the local parameters $t\in\Tm$. For real periodic solutions of the KdV equation, $t\in \mathcal{T}$ provides a global parameter of the space of spectral curves \cite{MO:75,Korotyaev:08}.
In \cite{H-K-S-3} these deformations are applied to the periodic solutions of the sinh-Gordon equation. We shall discuss these and other examples in more detail in forthcoming work.
\bibliographystyle{alpha}
\bibliography{harmonic}
\end{document}